\newtheorem{thm}{Theorem}[section]
\newtheorem{cor}[thm]{Corollary}
\newtheorem{lem}[thm]{Lemma}
\newtheorem{quest}[thm]{Question}
\newtheorem{conj}[thm]{Conjecture}
\theoremstyle{definition}
\newtheorem{defn}[thm]{\protect\definitionname}
\providecommand{\definitionname}{Definition}
\setlist[itemize]{itemsep=0ex, topsep=1ex}
\newcommand{\E}{\mathbb{E}}
\newcommand{\be}{\beta}
\newcommand{\gam}{\gamma}
\newcommand{\sig}{\sigma}
\newcommand{\ep}{\varepsilon}
\newcommand{\lam}{\lambda}
\newcommand{\Om}{\Omega}
\newcommand{\del}{\delta}
\renewcommand{\l}{\left}
\renewcommand{\r}{\right}
\newcommand{\half}{\frac{1}{2}}
\newcommand{\quart}{\frac{1}{4}}
\renewcommand{\c}[1]{\mathcal{#1}}
\newcommand{\mf}[1]{\mathfrak{#1}}
\newcommand{\ol}[1]{\overline{#1}}
\newcommand{\tr}[1]{\textrm{#1}}
\newcommand{\rec}[1]{\frac{1}{#1}}
\newcommand{\f}[2]{\frac{#1}{#2}}
\newcommand{\floor}[1]{\l\lfloor #1\r\rfloor}
\newcommand{\ceil}[1]{\l\lceil #1\r\rceil}
\newcommand{\rand}[1]{\boldsymbol{#1}}
\renewcommand{\S}{\mf{S}}
\renewcommand{\Pr}{\mathbb{P}}
\title{Card Guessing with Partial Feedback}
\author{Persi Diaconis\footnote{Dept. of Mathematics and Statistics, Stanford University. Research supported by NSF Grant DMS-1954042.} \and Ron Graham\footnote{Dept. of Computer Science and Engineering, UCSD,  {\tt graham@ucsd.edu}. } \and Xiaoyu He \footnote{Dept. of Mathematics, Stanford University, {\tt alkjash@stanford.edu}. Research supported by NSF Graduate Research Fellowship Grant No. DGE-1656518.}\and Sam Spiro\footnote{Dept.\ of Mathematics, UCSD, {\tt sspiro@ucsd.edu}. Research supported by NSF Graduate Research Fellowship Grant No. DGE-1650112.}}
\date{\today}
\begin{document}
	\maketitle
	
	\begin{abstract}
		Consider the following experiment: a deck with $m$ copies of $n$ different card types is randomly shuffled, and a guesser attempts to guess the cards sequentially as they are drawn.  Each time a guess is made, some amount of ``feedback'' is given.  For example, one could tell the guesser the true identity of the card they just guessed (the complete feedback model) or they could be told nothing at all (the no feedback model).  
		
		In this paper we explore a partial feedback model, where upon guessing a card, the guesser is only told whether or not their guess was correct. We show in this setting that, uniformly in $n$, at most $m+O(m^{3/4}\log m)$ cards can be guessed correctly in expectation.  This resolves a question of Diaconis and Graham from 1981, where even the $m=2$ case was open.
	\end{abstract}
	\section{Introduction}
	
	Let $\S_{m,n}$ be the set of words $\pi$ over the alphabet $[n]:=\{1,2,\ldots,n\}$ where each character in $[n]$ appears exactly $m$ times in $\pi$. We think of $\pi$ as some way to shuffle a deck of cards which has $m$ suits and $n$ card types.  For example, a standard deck of $52$ cards has $n=13$ values (Ace, Two, \ldots, King), each appearing $m=4$ times.  We find it helpful to think that $m$ is for \underline{m}ultiplicity and $n$ is for \underline{n}umber of values.  We refer to the elements of $\S_{m,n}$ as permutations, even though for $m>1$ this is technically not the case.  If $X$ is a finite set, we write $\rand{x}\sim X$ to indicate that $\rand{x}$ is chosen uniformly at random from $X$.
	
	Consider the following experiment: a deck with $m$ copies of $n$ different card types is randomly shuffled according to some $\rand{\pi}\sim \S_{m,n}$, and a guesser attempts to guess each card as it is drawn, and the drawn card is discarded after the guess is made (i.e. this is sampling without replacement).  Each time a guess is made, some amount of ``feedback" is given.  For example, one could tell the guesser the true identity of the card they just guessed (the complete feedback model) or they could be told nothing at all (the no feedback model).  This can also be viewed as a one player game where the guesser tries to either maximize or minimize the number of times their guesses are correct, and we will often refer to these models as games.  
	
	These sorts of models were considered by Blackwell and Hodges~\cite{BH} and Efron~\cite{E} in relation to clinical trials. Here, aiming for a fixed number of subjects, say 100, in a medical trial comprising say 4 treatments, a deck of 100 cards with 25 labeled with each treatment is prepared.  Subjects are assigned to treatments as they come into the clinic, sequentially, using the next card (which is then discarded).  Hospital staff has the option of ruling subjects ineligible.  If the staff has strong opinions about the efficacy of treatments and observes which treatments have been given out, they may guess what the next treatment is and bias the experiment by ruling a sickly subject ineligible. It is clearly of interest to be able to evaluate the expected potential bias.  
	
	Card guessing is also a mainstay of classical experiments to test ``Extra Sensory Perception'' (ESP). The most common experiment utilizes a deck of 25 cards where there are five copies of five different types of cards (so $m=n=5$ in our language) where the subjects iteratively try and guess the identity of the next card, and experimenters routinely give various kinds of feedback to enhance ``learning''.   Diaconis~\cite{D} and Diaconis and Graham~\cite{DG} give a review of these problems.
	
	In the no feedback model every strategy guesses $m$ cards correctly in expectation.  The distribution of correct guesses depends on the guessing strategy: if the guesser always guesses the same card type then the variance is 0, and it can be shown that the variance is largest if the guesser uses a permutation of the $mn$ values, see~\cite{DG}.
	
	The complete feedback model is more complicated, but optimal strategies were determined in~\cite{DG}.  Given a strategy $\c{G}$ for the guesser, let $C(\c{G},\pi)$ denote the number of correct guesses the guesser gets in the complete feedback model if they use strategy $\c{G}$ and the deck is shuffled according to $\pi$.  Let $\c{C}_{m,n}^+=\max_{\c{G}}\E[C(\c{G},\rand{\pi})]$, where $\rand{\pi}\sim \S_{m,n}$ and the maximum ranges over all possible strategies $\c{G}$.  Similarly define $\c{C}_{m,n}^-=\min_{\c{G}}\E[C(\c{G},\rand{\pi})]$. The following is proven in~\cite{DG}.
	
	\begin{thm}[\cite{DG}]\label{thm:old complete}
		If $\c{G}^{+}$ (respectively $\c{G}^-$) is the strategy where one guesses a most likely (respectively least likely) card at each step, then $\c{C}_{m,n}^\pm=\E[C(\c{G}^\pm,\rand{\pi})]$.  Moreover, \[\c{C}_{m,n}^\pm=m\pm M_n \sqrt{m}+o_n(\sqrt{m}),\]
		where $M_n=\Theta(\sqrt{\log n})$ is the expected maximum value of $n$ independent standard normal variables.
	\end{thm}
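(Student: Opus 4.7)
My plan is to handle the two claims separately: first optimality of the greedy and anti-greedy strategies, then the asymptotic formula.

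\textbf{Optimality.} I would argue by backward induction on $|S|$, the number of cards left in the deck. The essential feature of complete feedback is that after each draw the guesser learns the true identity of the drawn card \emph{regardless} of what they guessed, so the evolution of the deck state is a function of the shuffle alone, independent of strategy. Let $V_+(S)$ be the supremum over strategies of the expected number of future correct guesses when the remaining multiset is $S$; conditioning on the next drawn card,
\[
V_+(S) \;=\; \max_i \frac{N_i}{|S|} \;+\; \sum_i \frac{N_i}{|S|}\, V_+(S\setminus\{i\}),
\]
where $N_i$ is the multiplicity of type $i$ in $S$. Since the second sum does not depend on the current guess, the optimal play is any mode of $S$, which is precisely what $\mathcal{G}^+$ plays. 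The identical argument with $\min$ in place of $\max$ handles $\mathcal{G}^-$.

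\textbf{Asymptotic formula.} Given the first part,
\[
\mathcal{C}_{m,n}^+ \;=\; \sum_{s=1}^{mn} \mathbb{E}\!\left[\frac{\max_i N_i^{(s)}}{s}\right],
\]
where $N_i^{(s)}$ is the remaining count of type $i$ when exactly $s$ cards are left. Writing $Y_i^{(s)} := N_i^{(s)} - s/n$ and using $\mathbb{E}[N_i^{(s)}]=s/n$ splits this as
\[
\mathcal{C}_{m,n}^+ \;=\; m \;+\; \sum_{s=1}^{mn} \frac{\mathbb{E}[\max_i Y_i^{(s)}]}{s}.
\]
The joint distribution $(N_1^{(s)},\dots,N_n^{(s)})$ is multivariate hypergeometric with $\mathrm{Var}(N_i^{(s)}) = \frac{s(mn-s)(n-1)}{n^2(mn-1)}$ and covariances determined by exchangeability together with $\sum_i Y_i^{(s)} = 0$. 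For fixed $n$ and $m\to\infty$ with $s=xm$, $x\in(0,n)$, the multivariate CLT for sampling without replacement gives that $(Y_i^{(s)}/\sqrt m)$ converges in distribution to a centered exchangeable Gaussian vector summing to zero, which is itself distributed as $\alpha(x)\cdot(Z_i - \bar Z)$ for i.i.d.\ standard normals $Z_i$ and an explicit scale $\alpha(x)$ read off from the variance formula. Since $\mathbb{E}[\max_i(Z_i - \bar Z)] = \mathbb{E}[\max_i Z_i] = M_n$, this yields $\mathbb{E}[\max_i Y_i^{(s)}] \sim \alpha(x)\,M_n\sqrt m$. A Riemann-sum approximation converts
\[
\sum_{s=1}^{mn}\frac{\mathbb{E}[\max_i Y_i^{(s)}]}{s} \;=\; M_n\sqrt m \int_0^n \frac{\alpha(x)}{x}\,dx \cdot (1+o(1)),
\]
and direct evaluation of this elementary integral recovers the leading constant stated in the theorem. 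The $\mathcal{C}_{m,n}^-$ case is identical, using $\mathbb{E}[\min_i(Z_i - \bar Z)] = -M_n$.

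\textbf{Main obstacle.} The principal technical difficulty is upgrading the distributional convergence of $Y_i^{(s)}/\sqrt m$ to convergence of the expectations $\mathbb{E}[\max_i Y_i^{(s)}]/\sqrt m$: convergence in distribution alone does not imply convergence of means. I would establish uniform integrability of $(\max_i|Y_i^{(s)}|/\sqrt m)^{1+\delta}$ via sub-Gaussian tail bounds for the hypergeometric, e.g.\ using Hoeffding's inequality for sampling without replacement, and then take expectations on a union bound over the $n$ coordinates. A secondary obstacle is controlling the boundary regimes $s = o(m)$ and $mn - s = o(m)$ where the Gaussian approximation breaks down; these must be shown to contribute only $o_n(\sqrt m)$, which is most delicate in the terminal $O(n\log n)$ draws where the $N_i^{(s)}$ are essentially $0/1$-valued and must be handled by a direct combinatorial estimate rather than via CLT.
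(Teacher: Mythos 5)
This statement is imported verbatim from Diaconis--Graham \cite{DG} and is not proved in the present paper, so there is no in-paper argument to compare your proposal against; I can only assess the blueprint on its own terms.

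Your two-part plan is the natural one and is essentially sound. The backward-induction argument for optimality is correct: with complete feedback the deck state evolves independently of the guesses, so the recursion $V_+(S)=\max_i N_i/|S|+\sum_i (N_i/|S|)V_+(S\setminus\{i\})$ decouples the current reward from the continuation value, and greedy/anti-greedy are optimal. The reduction $\mathcal{C}_{m,n}^+ = m + \sum_s \mathbb{E}[\max_i Y_i^{(s)}]/s$, the hypergeometric variance computation, the identification of the Gaussian limit with $\alpha(x)(Z_i-\bar Z)$, and the observation $\mathbb{E}[\max_i (Z_i - \bar Z)] = M_n$ are all correct, and the two technical obstacles you flag (uniform integrability of the scaled maxima, and the terminal $O(n\log n)$ draws where the CLT fails) are genuinely the points that need care, with your proposed fixes (Hoeffding-for-sampling-without-replacement, and a direct combinatorial estimate at the boundary) being reasonable.

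The gap is that you assert without computing that ``direct evaluation of this elementary integral recovers the leading constant stated in the theorem,'' and it does not. From your own variance formula, $\alpha(x)^2\cdot\tfrac{n-1}{n}=\tfrac{x(n-1)(n-x)}{n^3}$ gives $\alpha(x)=\sqrt{x(n-x)}/n$, and then
\[
\int_0^n \frac{\alpha(x)}{x}\,dx = \int_0^1 \sqrt{\tfrac{1-u}{u}}\,du = B\bigl(\tfrac12,\tfrac32\bigr)=\frac{\pi}{2},
\]
so your sketch yields $\mathcal{C}_{m,n}^+ = m + \tfrac{\pi}{2}M_n\sqrt m + o(\sqrt m)$, not $m + M_n\sqrt m$. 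One can sanity-check this at $n=2$: writing $D_s:=N_1^{(s)}-N_2^{(s)}$, a direct computation of $\sum_s \mathbb{E}|D_s|/(2s)$ gives $\sqrt{\pi m/4}=\tfrac{\pi}{2}M_2\sqrt m$, which confirms the $\pi/2$ and contradicts $M_2\sqrt m = \sqrt{m/\pi}$. So either the theorem statement as quoted in this paper has dropped a constant factor (plausible, and immaterial for how the paper uses it, since only $\mathcal{C}_{m,n}^\pm=(1+o(1))m$ is needed), or there is a scaling error somewhere; in either case, asserting the constant matches without carrying out the integral is exactly the kind of unchecked step a proof cannot leave open, and you should have noticed the mismatch.
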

	
	One can also consider variants of these models where $\rand{\pi}$ is chosen according to some non-uniform distribution.  For $m=1$ the case when $\pi$ is obtained from a single riffle shuffle is studied by Ciucu~\cite{C} (no feedback), and Liu~\cite{L} (complete feedback).  Analysis under repeated ``top to random shuffles'' is done by Pehlivan~\cite{Pe}. We emphasize that for our results, we only consider the uniform distribution $\rand{\pi}\sim\S_{m,n}$.
	
	The main focus in this paper is on a feedback model called the partial feedback model, which returns an intermediate amount of information to the guesser.  After each guess, the guesser is only told whether their guess was correct or not (and thus not the identity of the card if they were incorrect).  This feedback protocol was recommended when conducting ESP trials and is a natural notion of bias if card guessing experiments are performed with experimenter and subject in the same room. Given a strategy $\c{G}$ for this game, let $P(\c{G},\pi)$ denote the number of cards the guesser guesses correctly using strategy $\c{G}$ if the deck is shuffled according to $\pi$, and define $\c{P}_{m,n}^+=\max_{\c{G}} \E[P(\c{G},\rand{\pi})]$ and $\c{P}_{m,n}^-=\min_{\c{G}} \E[P(\c{G},\rand{\pi})]$ for the maximum and minimum expected number of correct guesses possible, respectively.
	
	The partial feedback model is significantly more difficult to analyze than the other two models, and relatively little is known about it.  This is in large part due to the fact that we do not understand the optimal strategy in this game, and in particular it is not the case that the strategy $\c{G}^+$ of guessing a maximum likelihood card satisfies $\E[P(\c{G}^+,\rand{\pi})]=\c{P}_{m,n}^+$ for $m\ge 2$, see~\cite{DG}.  We note that bounds for the strategy $\c{G}^+$ in this model were studied recently by Gural, Simper, and So \cite{GSS}.  
	
	Define $\c{N}_{m,n}^{\pm}$ for the no feedback model analogous to how $\c{C}_{m,n}^{\pm}$ and $\c{P}_{m,n}^{\pm}$ were defined, and note that $\c{N}_{m,n}^{\pm}=m$.  One can easily show that $\c{N}_{m,n}^+\le \c{P}_{m,n}^+\le \c{C}_{m,n}^+$ for all $m$ and $n$, with the reverse inequalities holding for $-$ instead of $+$. In particular, by Theorem~\ref{thm:old complete} and the fact that $\c{N}_{m,n}^{\pm}=m$, we obtain $\c{P}_{m,n}^\pm =(1+o(1)) m$ as $m$ goes to infinity, for any fixed $n$.  Motivated by this, our focus for this paper will be in bounding $\c{P}_{m,n}^\pm$ when $m$ is fixed and $n$ is large.  As a point of comparison, we first establish the value of $\c{C}_{m,n}^\pm$ in this regime. Here and throughout we let $\log$ denote the natural logarithm.
	
	\begin{thm}\label{thm:complete}
		For $m$ fixed and $n\rightarrow\infty$, we have
		\[\c{C}_{m,n}^+ = (1+o(1)) H_m \log n,\]
		where $H_m=\sum_{i=1}^{m} j^{-1}$ is the $m$-th harmonic number, and \[\c{C}_{m,n}^-=\Theta(n^{-1/m}).\]
	\end{thm}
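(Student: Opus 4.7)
The plan is to analyze the optimal strategies $\c{G}^\pm$ from Theorem~\ref{thm:old complete}. Under $\c{G}^+$, the probability that the guess at step $t$ is correct (given the history) equals $M(t-1)/(mn-t+1)$, where $M(s)$ denotes the maximum multiplicity of a card type among the $mn-s$ cards remaining after $s$ draws; crucially $M(s)$ depends only on the shuffle $\rand{\pi}$, not on the strategy. By linearity of expectation and the identity $\E[M(s)] = \sum_{j=1}^{m}\P[M(s)\ge j]$,
\[ \c{C}_{m,n}^+ \;=\; \sum_{j=1}^m \sum_{s=0}^{mn-1} \frac{\P[M(s)\ge j]}{mn-s}. \]
For each $j$ let $T_j$ be the first $s$ at which $M(s)<j$. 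Writing $B_c^{(k)}$ for the position of the $k$-th copy of type $c$ in $\rand{\pi}$, a type has fewer than $j$ copies remaining at time $s$ iff its $(m-j+1)$-st copy has already been drawn, so $T_j=\max_c B_c^{(m-j+1)}$. Since $\P[M(s)\ge j]=\1[s<T_j]$, the inner sum rewrites as $\E[H_{mn}-H_{mn-T_j}]=\E[\log(mn/(mn-T_j))]+O(1/n)$.

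The crucial estimate is that, for each $j\in\{1,\dots,m\}$,
\[ \E\!\left[\log\tfrac{mn}{mn-T_j}\right] \;=\; \frac{\log n}{j}+O(1).\]
The positions of the $m$ copies of a fixed type $c$ form a uniformly random $m$-subset of $[mn]$, so the number of these copies in the last $r=mn-s$ positions is hypergeometric, well approximated by $\mr{Bin}(m,r/(mn))$ when $r \ll mn$. Hence $\P[B_c^{(m-j+1)}>s]\approx\binom{m}{j}(r/(mn))^j$, and combining a union bound over $c$ with a Poisson approximation yields
\[ \P[\,mn-T_j\le r\,]\;\approx\; 1-\exp\!\Big(-\tfrac{\binom{m}{j}\,r^j}{m^j\,n^{j-1}}\Big).\]
This says $mn-T_j$ concentrates on scale $n^{1-1/j}$, with right tail $e^{-\Theta(K^j)}$ at scale $Kn^{1-1/j}$ and left tail $O(\ep^j)$ at scale $\ep n^{1-1/j}$; integrating $\log(mn/r)$ against this distribution gives $(\log n)/j+O(1)$. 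The case $j=1$ is degenerate ($T_1=mn$ always) and contributes $H_{mn}=\log n+O(1)$ directly. Summing over $j$ yields $\c{C}_{m,n}^+=H_m\log n+O(1)=(1+o(1))H_m\log n$.

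For $\c{C}_{m,n}^-$, let $T:=\min_c B_c^{(m)}$ be the first step at which some type is fully depleted. Under $\c{G}^-$, once any type is depleted the guesser guesses a depleted type thereafter and is forever wrong, so all correct guesses occur at times $t\le T$; for such $t$, the guess is of a non-depleted type of multiplicity at most $m$, giving $\P[\text{correct at }t]\le m/(mn-t+1)=O(1/n)$. An analogous hypergeometric estimate gives $\P[T\le t]\le n(t/(mn))^m=t^m/(m^m n^{m-1})$, so $T=\Theta(n^{1-1/m})$ in distribution with $\E[T]=\Theta(n^{1-1/m})$; hence $\c{C}_{m,n}^-\le O(1/n)\cdot\E[T]=O(n^{-1/m})$. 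The matching lower bound follows since for $t\le cn^{1-1/m}$ with $c$ small, $\P[T\ge t]\ge 1/2$, and on that event the guess has multiplicity $\ge 1$, so $\P[\text{correct at }t]\ge 1/(mn)$; summing over such $t$ gives $\c{C}_{m,n}^-=\Omega(n^{-1/m})$.

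The main obstacle is controlling the small-probability tail $\{mn-T_j\ll n^{1-1/j}\}$ in the upper bound for $\c{C}_{m,n}^+$: since $\log(mn/(mn-T_j))$ can be as large as $\Theta(\log n)$ on this event, one must verify its contribution is $o(\log n)$, which relies on the $\ep^j$ tail decay above and on justifying the hypergeometric-to-binomial approximation uniformly in the relevant regime.
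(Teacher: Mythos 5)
Your proposal follows essentially the same route as the paper's proof: both analyze the maximum/minimum likelihood strategies guaranteed optimal by Theorem~\ref{thm:old complete}, use linearity of expectation, decompose by multiplicity level $j$, and reduce everything to tail estimates for an occurrence time $T_j$. Your notational choices differ (you define $T_j$ via the maximum remaining multiplicity $M(s)$ and the expansion $\E[M(s)]=\sum_j \P[M(s)\ge j]$, working from the front of the deck, whereas the paper defines $T_j$ as the first position where some type has been seen $j$ times and then reflects), but after the reflection $mn-T_j^{\mathrm{yours}} \stackrel{d}{\approx} T_j^{\mathrm{paper}}$ the two reductions are the same, and your identity $\c{C}_{m,n}^+ = \sum_j \E\bigl[H_{mn}-H_{mn-T_j}\bigr]$ matches the paper's $\c{C}_{m,n}^+=m\log n+O(1)-\sum_j\E[\log T_j]$.

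The real issue is how you establish the tail estimates. The left-tail bound $\P[mn-T_j\le \ep n^{1-1/j}]=O(\ep^j)$ is a clean union/first-moment bound and is fine; this corresponds to the first half of the paper's Lemma~\ref{lem:compProb}. But the right-tail bound $\P[mn-T_j\ge Kn^{1-1/j}]=e^{-\Theta(K^j)}$ (and the exact asymptotic $1-\exp(-\binom{m}{j}r^j/(m^jn^{j-1}))$) is asserted from a Poisson approximation for the number of types with $\ge j$ copies in a fixed suffix, and these indicators are dependent because the multiset is fixed. Making that rigorous requires Chen--Stein or a direct second-moment computation; you do not supply one. The paper sidesteps the exponential claim entirely and proves only the weaker polynomial bound $\P[T_j>\gam n^{1-1/j}]=O(\gam^{-j})$ via Chebyshev, which is already enough both to integrate $\log$ and to get $\E[T_m]=O(n^{1-1/m})$. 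Note also that in your closing paragraph you single out the left tail as ``the main obstacle,'' but the left tail is the easy first-moment direction; the right tail is what actually requires the second-moment argument, and it is needed for the lower bound $\c{C}_{m,n}^+\ge(1-o(1))H_m\log n$ (to stop $mn-T_j$ from being too large, which would drag $\E[\log(mn/(mn-T_j))]$ down) and for the upper bound $\c{C}_{m,n}^-=O(n^{-1/m})$ (via $\E[T]=O(n^{1-1/m})$). As written your proposal has no rigorous path to that bound.
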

	The case $m=1$ of Theorem~\ref{thm:complete} was proved in~\cite{DG}, and we give its simple proof to provide some intuition.  Assuming we always guess a card that is in the deck, the chance of getting the first guess correct is $1/n$, then the second is $1/(n-1)$, and so on.  Thus the expected value is exactly $1+1/2+\cdots+1/n=\log n+O(1)$ as claimed.
	
	Theorem~\ref{thm:complete} shows that for any fixed $m$, in expectation the guesser can achieve arbitrarily many or few correct guesses as $n$ grows in the complete feedback model. In sharp contrast, we show that the guesser cannot obtain arbitrarily many correct guesses in the partial feedback model.

	\begin{thm}\label{thm:main}
		If $n$ is sufficiently large in terms of $m$, we have
		\[\c{P}_{m,n}^+= m+O(m^{3/4}\log^{1/4} m).\]
	\end{thm}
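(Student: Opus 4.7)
The lower bound $\c{P}_{m,n}^+\ge m$ is immediate, since any strategy may simply ignore the partial feedback, reducing to the no-feedback baseline $\c{N}_{m,n}^+=m$.

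For the upper bound, fix any strategy $\c{G}$, write $T=P(\c{G},\rand{\pi})$ for the number of correct guesses, $E_k$ for the event that the $k$-th guess is correct, $h_k$ for the history of guesses and correctness outcomes prior to step $k$, $r_c=r_c(\rand{\pi},k)$ for the number of type-$c$ cards still remaining at step $k$, $g_k=g_k(h_k)$ for the strategy's guess, and $N_k:=mn-k+1$. Since the $k$-th card drawn is uniform over the remaining multiset,
\[
\Pr[E_k\mid h_k]=\frac{\E[r_{g_k}\mid h_k]}{N_k}\le\frac{\max_c\E[r_c\mid h_k]}{N_k}=\frac{1}{n}+\frac{\Delta_k}{N_k},
\]
where $\Delta_k:=\max_c\E[r_c\mid h_k]-N_k/n\ge 0$ by pigeonhole (the conditional expectations sum to $N_k$). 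Summing over $k$ collapses the $1/n$ term to $m$, giving
\[
\E[T]\le m+\sum_{k=1}^{mn}\frac{\E[\Delta_k]}{N_k},
\]
so the upper bound reduces to showing $\sum_k \E[\Delta_k]/N_k=O(m^{3/4}\log^{1/4}m)$.

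My plan for the remainder sum has three steps. \emph{Step 1 (posterior formula).} Let $B_c$ (resp.\ $A_c$) be the set of indices $i<k$ at which type $c$ was guessed correctly (resp.\ incorrectly). Write $r_c=m-|B_c|-Z_c$, where $Z_c$ counts the type-$c$ cards occupying positions at which a wrong guess of some other type was made. By the symmetry of the types not previously touched, $\E[Z_c\mid h_k]$ depends only on the guess statistics $(|A_{c'}|,|B_{c'}|)_{c'}$, yielding a closed-form expression for the posterior deviation $\E[r_c\mid h_k]-N_k/n$ in terms of the statistics for type $c$. \emph{Step 2 (moment bound on $\Delta_k$).} The exponent $1/4$ in the target $m^{3/4}\log^{1/4}m$ strongly suggests a fourth-moment estimate: applying the elementary bound $\E[\max_c X_c]\le(\sum_c\E[X_c^4])^{1/4}$ to $X_c:=(\E[r_c\mid h_k]-N_k/n)_+$, one bounds $\sum_c\E[X_c^4]$ via the hypergeometric-like fourth moments of the $r_c$, using that only the types directly probed by the strategy can contribute nontrivially. \emph{Step 3 (summing over $k$).} The harmonic-like sum $\sum_k 1/N_k\sim\log(mn)$ combines with the per-step moment estimate to produce the $\log^{1/4}m$ factor, while the cumulative fourth-moment budget produces the $m^{3/4}$ factor.

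The main obstacle will be Step 2: the partial feedback creates subtle correlations in the posterior, and an adaptive strategy can in principle concentrate the posterior on successively probed types. I expect to handle this via a stochastic domination argument, comparing the true posterior $r_c\mid h_k$ to a simpler reference distribution (for example, one that records only the number of correct guesses of each type so far) whose fourth moments are tractable. The proof will then close via a self-consistency step: since any informational gain above baseline is itself paid for by correct guesses, one obtains a quartic-type inequality of the form $(\E[T]-m)^4\le C\,m^2\,\E[T]\log m$, which, together with a preliminary crude bound $\E[T]\le 2m$, yields $\E[T]-m=O(m^{3/4}\log^{1/4}m)$, matching the theorem.
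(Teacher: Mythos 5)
Your reduction to bounding $\sum_k \E[\Delta_k]/N_k$ is sound, and the observation that $\Delta_k\ge 0$ with $\sum_k (1/n)=m$ is the right skeleton. But from there the argument has genuine gaps, and the paper proceeds quite differently.

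\textbf{Step 1 is not as tractable as you hope.} The posterior $\E[r_c\mid h_k]$ is a ratio of permanents of $0$--$1$ matrices (the paper cites exactly this connection to \cite{CDGM}, \cite{DGH}); there is no usable closed form for adversarial guess patterns. The paper sidesteps exact computation entirely: its Lemma~\ref{lem:pointwise} gives the one-sided inequality
\[
\Pr[\rand{\pi}_t=i\mid h_{t-1}]\le \frac{m_i(h_{t-1})}{mn-a_i(h_{t-1})-Y(h_{t-1})},
\]
proved by a short bijective (cyclic-shift) argument on permutations with forbidden positions. That inequality, not a posterior formula, is the engine of the proof.

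\textbf{Step 2 is where the argument breaks.} The bound $\E[\max_c X_c]\le(\sum_c\E[X_c^4])^{1/4}$ is correct but far too weak here. The max runs over up to $n$ types, and for a type $c$ that the guesser has probed $a_c\approx mn$ times with negative feedback, the posterior mass on $c$ concentrates and $X_c$ can be $\Theta(m)$. A single such term in the sum makes the fourth-moment estimate useless. This is precisely the adaptive-adversary phenomenon that your ``stochastic domination, details TBD'' gestures at, and it is the core difficulty of the theorem. The paper's answer is a structural decomposition: every guess is classified as \emph{subcritical} ($a_i<\ep mn$), \emph{critical} ($\ep mn\le a_i<(1-\ep)mn$), or \emph{supercritical} ($a_i\ge(1-\ep)mn$), and each regime is handled by a tailored concentration argument (binomial domination in the subcritical case, hypergeometric domination in the critical case, and a trivial count in the supercritical case, where almost every guess must be of a single type). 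Nothing in your fourth-moment framework plays the role of this case analysis.

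\textbf{Step 3 asserts the conclusion.} The quartic self-consistency inequality $(\E T-m)^4\le Cm^2\,\E T\log m$ is essentially a restatement of the theorem; no mechanism is given for deriving it. Likewise the preliminary bound $\E T\le 2m$ is stated as if available, but it requires a separate argument (the paper's Lemma~\ref{lem:weak}, itself built on the pointwise lemma plus Chernoff). So while your plan correctly identifies that the proof should split into a pointwise estimate plus a concentration/summation step, the concentration step is where all the work lies, and your proposal does not yet contain the idea (regime decomposition together with a careful union bound over adaptively-chosen stopping times, the paper's Lemma~\ref{lem:union}) needed to close it.
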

	
	This resolves a 40 year old problem of Diaconis and Graham~\cite{DG}, which was open even for $m=2$ (i.e. a deck with composition $\{1,1,2,2,\ldots,n,n\}$).  In particular, this shows that the information from the partial feedback model is not enough for the guesser to correctly guess asymptotically more cards compared to when they are given no feedback at all. We suspect that the error term in Theorem~\ref{thm:main} can be improved to $m^{1/2 + o(1)}$, which would be best possible; see the discussion in Section~\ref{sec:concluding}.
	
	We conclude this introduction with some brief remarks about the related literature. In the partial feedback model, the enumeration of the number of permutations consistent with a given sequence of guesses can be reduced to the evaluation of certain permanents, see Chung, Diaconis, Graham, and Mallows~\cite{CDGM} and Diaconis, Graham, and Holmes~\cite{DGH}.  These papers contain applications to the partial feedback model, as well as a fascinating ``persistence conjecture'': whenever the guesser guesses a card type $i$ incorrectly, it is optimal for them to continue to guess $i$ in the next step.
	
	Throughout, we focused on evaluating the expected number of correct guesses.  The distribution of the number of correct guesses is treated in~\cite{DG}, see also Proschan~\cite{Pr}. A variety of other feedback mechanisms have also been explored, such as less feedback if the guesser is doing well, and telling the guesser that their guess is ``high'' or ``low'', see Samaniego and Utts~\cite{SU}.
	
	Our evaluation for these models gives one point for each card guessed correctly. It is also natural to consider weighted scores: a correct guess early on might be weighted more heavily than a correct guess towards the end since more information is available to the guesser later on.  This is known as skill scoring and is discussed in~\cite{DG} and Briggs and Ruppert~\cite{BR}.
	
	\textbf{Organization}.  This paper is organized as follows. In Section~\ref{sec:complete}, we prove Theorem~\ref{thm:complete} by analyzing the number of correct guesses made by the maximum (or minimum) likelihood guessing strategy, which is guaranteed to be optimal by Theorem~\ref{thm:old complete}. In Section~\ref{sec:partial}, we prove our main result Theorem~\ref{thm:main} that one cannot do much better than randomly guessing in the partial feedback model. The key ingredient is Lemma~\ref{lem:pointwise}, which shows that in ``typical'' game states, no matter what card the guesser guesses the probability of guessing correctly is at most $(1+o(1))n^{-1}$. Finally, in Section~\ref{sec:concluding} we make some concluding remarks, highlighting some of the many open problems left in this area.
	
	\section{The Complete Feedback Model}\label{sec:complete}
	
	In this section we prove Theorem~\ref{thm:complete}. Throughout this section we treat $m$ as a fixed value, and hence the implicit constants in our asymptotic notation are allowed to depend on $m$.
	
	\begin{lem}\label{lem:compProb}
		For $1\le j\le m$, let $T_j$ be the smallest value $t$ such that $\rand{\pi}_t$ is the $j$th occurrence of some card type.  Equivalently, it is the largest $t$ such that $\{\rand{\pi}_1,\ldots,\rand{\pi}_{t-1}\}$ contains no card type with multiplicity at least $j$.  If $t=\gam n^{1-1/j}$ (here $\gamma > 0$ may depend on $n$), we have
		\begin{align*}
			\Pr[T_j>t]&=1-O(\gam^{j}),\\ 
			\Pr[T_j>t]&=O(\gam^{-j}).
		\end{align*}
	\end{lem}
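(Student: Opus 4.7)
The plan is to reduce both bounds to first- and second-moment estimates on the indicators $\mathbbm{1}_{A_i}$ of the events $A_i = \{X_i \ge j\}$, where $X_i$ is the number of copies of card type $i$ appearing in positions $\rand{\pi}_1, \ldots, \rand{\pi}_t$. The key observation is that $\{T_j > t\}$ is precisely the event that no $A_i$ occurs. Since $\rand{\pi} \sim \S_{m,n}$, the set of positions occupied by any fixed type $i$ is a uniformly random $m$-subset of $[mn]$, so $X_i$ is hypergeometric with parameters $(mn, m, t)$. Combining a union bound over the $\binom{m}{j}$ choices of which $j$ copies of type $i$ land in $[t]$ (for an upper bound on $\Pr[A_i]$) with the $k = j$ term of the hypergeometric pmf (for a matching lower bound), one gets, in the main regime $t = o(n)$,
\[\Pr[A_i] = (1+o(1))\binom{m}{j}\frac{t^j}{(mn)^j} = \Theta_m\!\left(\frac{\gamma^j}{n}\right).\]

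The first bound of the lemma follows immediately from a union bound: $\Pr[T_j \le t] \le \sum_{i=1}^n \Pr[A_i] = O_m(\gamma^j)$, so $\Pr[T_j > t] = 1 - O(\gamma^j)$. For the second bound, set $Y = \sum_{i=1}^n \mathbbm{1}_{A_i}$, so that $\{T_j > t\} = \{Y = 0\}$ and $\E[Y] = \Theta_m(\gamma^j)$. I would apply Chebyshev's inequality in the form $\Pr[Y = 0] \le \Var(Y)/\E[Y]^2$; provided $\Var(Y) = O(\E[Y])$, this yields the desired $\Pr[T_j > t] = O(\gamma^{-j})$. Expanding $\Var(Y)$ and using symmetry, the task reduces to establishing the pairwise negative correlation $\Pr[A_1 \cap A_2] \le \Pr[A_1]\Pr[A_2]$.

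I expect this pairwise negative correlation to be the main technical step. My plan is to argue it by stochastic monotonicity: given $X_1 = k$, the positions of type $2$ are uniformly distributed among the $mn - m$ positions not occupied by type $1$, so $X_2 \mid X_1 = k$ is hypergeometric with parameters $(mn - m, m, t - k)$. As $k$ increases, the number of draws $t - k$ decreases, making $X_2 \mid X_1 = k$ stochastically decreasing in $k$. In particular $\Pr[A_2 \mid X_1 = k]$ is a decreasing function of $k$, so conditioning on $A_1 = \{X_1 \ge j\}$ can only lower it: $\Pr[A_2 \mid A_1] \le \Pr[A_2]$. Combined with the first-moment computation, Chebyshev then completes the proof. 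The edge regime where $t$ is not $o(n)$ makes one of the two bounds vacuous and the other more concentrated, and is handled by the same arguments with the asymptotic replaced by direct estimates (or by monotonicity in $t$).
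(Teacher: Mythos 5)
Your proposal is correct, and for the second bound it takes a genuinely different (and arguably cleaner) route than the paper. Both use the second moment method, but the choice of random variable differs. The paper sums over $j$-tuples of \emph{positions}: it sets $S = \sum_x A(x)$ where $A(x)$ indicates that positions $x_1 < \cdots < x_j \le t$ all carry the same card type, and then bounds $\mathrm{Var}(S)$ by splitting pairs $(x,y)$ according to how many positions they share, which requires a fairly intricate case analysis (including a delicate estimate for disjoint tuples holding the same card type versus different card types). You instead sum over \emph{card types}: $Y = \sum_i \mathbbm{1}_{A_i}$ with $A_i = \{X_i \ge j\}$ and $X_i \sim \mathrm{Hyp}(mn, t, m)$. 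The variance estimate then reduces to pairwise negative correlation $\Pr[A_1 \cap A_2] \le \Pr[A_1]\Pr[A_2]$, which you get via the observation that $X_2 \mid X_1 = k$ is $\mathrm{Hyp}(mn - m, t - k, m)$ and hence stochastically decreasing in $k$; conditioning on the increasing event $\{X_1 \ge j\}$ can only lower $\Pr[X_2 \ge j]$. This negative-correlation route avoids the paper's careful comparison of $p_{x,y}$ with $p_x p_y$ and is conceptually tidier, at the cost of invoking (standard but not entirely trivial) stochastic monotonicity facts for the hypergeometric. The first bound (union over card types) is essentially identical to the paper's. One small point to make airtight in a final writeup: your first-moment upper bound should be stated as $\Pr[A_i] = \Theta_m\bigl(t^j (mn)^{-j}\bigr)$ coming from the full sum $\sum_{k \ge j}$ being dominated by the $k = j$ term when $t = o(n)$, rather than from a literal union bound over subsets of the $m$ copies, since the $m$ positions of a fixed card are sampled without replacement and the ``copies'' are indistinguishable; but this is cosmetic, as the computation lands in the same place.
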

	We postpone this proof for the moment and show how this implies the result.
	
	\begin{proof}[Proof of Theorem~\ref{thm:complete}]
		We start with the proof of the bounds on $\c{C}_{m,n}^-$ (the lowest expected number of correct guesses possible with complete feedback), and recall from Theorem~\ref{thm:old complete} that this equals $\E[C(\c{G}^-,\rand{\pi})]$ where $\c{G}^-$ is the strategy of guessing a least likely card at each stage.  Let $Y_t$ denote the indicator function for successfully guessing $\rand{\pi}_t$ and let $J_t$ denote the largest multiplicity of a card type appearing in $\{\rand{\pi}_1,\ldots,\rand{\pi}_{t-1}\}$.  Because there are $mn-t+1$ total cards in the deck when one guesses $\rand{\pi}_t$, we have 
		\[
		\Pr[Y_t=1]=\sum_{j=0}^m \Pr[Y_t=1|J_t=j]\Pr[J_t=j]=\sum_{j=0}^m \f{m-j}{mn-t+1}\Pr[J_t=j].
		\]  
		Using $1\le m-j\le m$ for $j<m$, we find
		\[
		\f{\Pr[J_t<m]}{mn-t+1}\le \Pr[Y_t=1]\le \f{m\Pr[J_t<m]}{mn-t+1}.
		\] 
		
		Note that $J_t<m$ if and only if $\{\rand{\pi}_1,\ldots,\rand{\pi}_{t-1}\}$ contains no card with multiplicity $m$, which happens if and only if $T_m\ge t$ (since $T_m$ is the largest $t$ for which this occurs).  Further, $T_m<t$ whenever $t> (m-1)n+1$ (when there are only $n-1$ cards left in the deck one of the cards must have appeared $m$ times), and we always have $t\ge 1\ge 0$.  In total we find that
		\[
		(mn)^{-1}\Pr[T_m\ge t]\le \Pr[Y_t=1]\le m n^{-1}\Pr[T_m\ge t].
		\]
		Because $C(\c{G}^-,\rand{\pi})=\sum Y_t$, we conclude by linearity of expectation that
		\[
		(mn)^{-1}\sum \Pr[T_m\ge t]\le \c{C}_{m,n}^-\le mn^{-1} \sum \Pr[T_m\ge t],
		\]
		so it will be enough to show  $\sum\Pr[T_m\ge t]=\Theta(n^{1-1/m})$. 
		
		For $m=1$ we have $T_m=1$ and the result is immediate, so assume $m\ge 2$.  By Lemma~\ref{lem:compProb}, there exists a sufficiently small constant $c>0$ such that $\Pr[T_m\ge cn^{1-1/m}]\ge \half$.  Using this and the fact that $\Pr[T_m\ge t]\ge \Pr[T_m\ge t+1]$,
		\[
		\sum \Pr[T_m\ge t]\ge c n^{1-1/m} \Pr[T_m \ge c n^{1-1/m}]\ge \half cn^{1-1/m},
		\]
		proving the lower bound. 
		
		For the upper bound, by Lemma~\ref{lem:compProb}, for all $\gam\ge 1$, 
		\[
		\Pr[T_m\ge \gam n^{1-1/m}]\le O(\gam^{-m})\le O(\gam^{-2}).
		\]
		Using $\Pr[T_m\ge t]\le 1$ for $t\le n^{1-1/m}$ and the above inequality, we find
		\begin{align*}
			\sum \Pr[T_m\ge t]&\le n^{1-1/m}+\sum_{p=1}^\infty \sum_{t=p n^{1-1/m}}^{(p+1)n^{1-1/m}-1} O(p^{-2}) \\
			&=n^{1-1/m}+n^{1-1/m}\sum_{p=1}^\infty O(p^{-2})=O(n^{1-1/m}),
		\end{align*}
		giving the desired result. 
		
		We now turn to $\c{C}_{m,n}^+$.  Let $X_t$ be the indicator function of the event that we guess $\rand{\pi}_{t}$ correctly using strategy $\c{G}^+$.  Define $J'_t$ to be the largest multiplicity of a card type in $\{\rand{\pi}_{mn},\ldots,\rand{\pi}_{mn-t+2}\}$ and $T'_j$ the largest value $t$ such that $\{\rand{\pi}_{mn},\ldots,\rand{\pi}_{mn-t+2}\}$ contains no card type with multiplicity at least $j$.  Similar to before we find that $J'_t\ge j$ if and only if $T'_j< t$ and that \[\Pr[X_{mn-t+1}=1]=\sum_{j=1}^m \f{j}{t-1}\Pr[J_{t}'=j]=\rec{t-1}\sum_{j=1}^m \Pr[J_{t}'\ge j]=\rec{t-1}\sum_{j=1}^m \Pr[T_j< t],\]  
		where this last step used that $T_j$ and $T'_j$ have the same distribution.
		Using this, $\sum_{k=1}^N k^{-1}=\log N+O(1)$, and $\log m=O(1)$, we find
		\begin{align*}
			\c{C}_{m,n}^+&=\sum_t \E[X_{mn-t+1}]=\sum_t \sum_j \f{\Pr[T_j<t]}{t-1}=\sum_j \sum_t \sum_{s<t} \f{\Pr[T_j=s]}{t-1}\\&=\sum_j\sum_s \Pr[T_j=s]\sum_{t>s} \rec{t-1}\\&=\sum_j \sum_s \Pr[T_j=s](\log mn-\log s+O(1))\\&=m\log n+O(1)-\sum_j \E[\log T_j].
		\end{align*}
		
		Thus to get the desired result it will be enough to show that $\E[\log T_j] = (1-j^{-1}+o(1))\log n$ for all $j$.  From now on we fix some $1\le j\le m$.  Using summation by parts, we find for any $0<\ep<(2m)^{-1}$ that 
		\begin{align}
			\E[\log T_j]&=\sum_{t=2}^{mn} \log t\Pr[T_j=t]=\sum_{t=2}^{mn} \log t(\Pr[T_j> t-1]-\Pr[T_j>t])\nonumber\\ 
			&=\sum_{t=1}^{mn} \Pr[T_j>t](\log(t+1)-\log t)=\sum_{t=1}^{mn} \Pr[T_j> t]\log(1+t^{-1})\nonumber\\ 
			&=O(1)+\sum_{t=1}^{mn} \Pr[T_j> t] t^{-1}\label{eq:compUp}\\ 
			&=O(1)+\sum_{t\le n^{1-j^{-1}-\ep}} (1-o(1))t^{-1}+\sum_{t>n^{1-j^{-1}-\ep}} \Pr[T_j> t]t^{-1},\label{eq:compLow}
		\end{align}
		where we used $\log(1+t^{-1})=t^{-1}+O(t^{-2})$ to get \eqref{eq:compUp} and Lemma~\ref{lem:compProb} with $\gam\le n^{-\ep}$ to get \eqref{eq:compLow}.  By ignoring the second sum in \eqref{eq:compLow}, we see that $\E[\log T_j]\ge (1-j^{-1}-\ep)\log n+o(\log n)$.  To get an upper bound, we use \eqref{eq:compUp}, the bound $\Pr[T_j>t]\le 1$, and Lemma~\ref{lem:compProb} with $\gam\ge n^\ep$ to get 
		\[
		\E[\log T_j]\le O(1)+\sum_{t\le n^{1-j^{-1}+\ep}} t^{-1}+\sum_{t\ge n^{1-j^{-1}+\ep}} o(t^{-1})=O(1)+ (1-j^{-1}+\ep)\log n+o(\log n).
		\] 
		By taking $\ep$ to be arbitrarily small, we find that $\E[\log T_j] = (1-j^{-1}+o(1)) \log n$ for all $j$, giving the desired result.
	\end{proof}
	It remains to prove Lemma~\ref{lem:compProb}. 
	
	\begin{proof}[Proof of Lemma~\ref{lem:compProb}]
		The first bound is trivial if $t$ is of order $n$, so assume $t=o(n)$. Let $F_j(i)$ with $i\in [n]$ be the event that $\{\rand{\pi}_{1},\ldots,\rand{\pi}_{t}\}$ contains at least $j$ copies of $i$, and let $F_j=\bigcup_i F_j(i)$.  Observe that $T_j\ge t+1$ if and only if $F_j$ does not occur, so it will be enough to show that $\Pr[F_j]=O(\gam^j)$ for $t=\gam n^{1-1/j}$.  Indeed, by a simple counting argument we find
		\[
		\Pr[F_j(i)]=\sum_{j'\ge j} {t\choose j'}{mn-t\choose m-j'}\f{(mn-m)!}{(m!)^{n-1}} \cdot \f{(m!)^n}{(mn)!}=\sum_{j'\ge j} O(t^{j'}n^{-j'})=O(t^j n^{-j})=O(\gam^jn^{-1}),
		\]
		where this second to last step used $tn=o(1)$ when taking the sum.  Taking the union bound over all $i\in [n]$ gives the first result.
		
		For a tuple $x=(x_1,\ldots,x_j)$ with $1\le x_1<\cdots<x_j\le t$, let $A(x)$ be the Bernoulli variable which is 1 if $\rand{\pi}_{x_p}$ is the same value for all $p$, and let $S=\sum_x A(x)$.  Observe that $T_j\ge t+1$ if and only if $S=0$, i.e. if no set of $j$ indices all have the same card type.  Thus it will be enough to show $\Pr[S=0]=O(\gam^{-j})$, which we do by using Chebyshev's inequality.  To this end, let $p_{x}=\Pr[A(x)=1]$ and $p_{x,y}=\Pr[A(x)=A(y)=1]$ for $x\ne y$.  If $\mu$ and $\sig^2$ denote the mean and variance of $S=\sum A(x)$, then by linearity of expectation we get
		
		\begin{align}\label{eq:compVar}
			\sig^2=\E[S^2]-\E[S]^2=\sum_x p_x+2\sum_{x < y} p_{x,y}-\sum_x p_x^2-2\sum_{x < y}p_xp_y\le \mu+2\sum_{x< y}(p_{x,y}-p_{x} p_{y}).
		\end{align} 
		
		To compute $\mu$, note that for all $x$ we have 
		
		\begin{equation}
			p_{x}=\f{m-1}{mn-1}\cdots \f{m-j+1}{mn-j+1}=\f{(m-1)!(mn-j)!}{(m-j)!(mn-1)!}=\Theta(n^{1-j}),\label{eq:px}
		\end{equation}
		and as there are ${t\choose j}$ options for $x$, we have \begin{equation}\label{eq:mu}\mu=\Theta(t^j n^{1-j})=\Theta(\gam^j).\end{equation}
		
		To bound the rest of $\sig^2$, fix some tuple $x$ and let $V_k=\{y:|\{x_1,\ldots,x_j\}\cap \{y_1,\ldots,y_j\}|=k\}$ for $0\le k\le j-1$.  By symmetry, we see that
		\begin{equation}\label{eq:rewrite}
			2\sum_{x< y}(p_{x,y}-p_{x} p_{y})={t\choose j}\sum_{k=0}^{j-1} \sum_{y\in V_k} (p_{x,y}-p_{x} p_{y}).
		\end{equation}
		Thus it will be enough to bound the inner sum for each $k$.  First consider the case $k>0$.  In this case $p_{x,y}$ is the probability that some given $2j-k$ positions of $\rand{\pi}$ take on the same value.  This is 0 if $2j-k>m$, and otherwise by the same reasoning as above
		\[
		p_{x,y}=\f{(m-1)!(mn-2j+k)!}{(m-2j+k)!(mn-1)!}=O(n^{1-2j+k}).
		\]  
		Note that $|V_k|=O(t^{j-k})$, so in total this part of the sum is at most $O(t^{j-k}n^{-2j+k+1})$.  Because $t=O(n)$, this quantity is maximized (in order of magnitude) when $k$ is as large as possible, so we have 
		\[
		\sum_{k=1}^{j-1}\sum_{y\in V_k}(p_{x,y}-p_xp_y)=O(t n^{-j})=O(\gam n^{-j+1-1/j})=O(\gam n^{-j+1-1/m}),
		\]
		where this last step used $j\le m$.
		
		It remains to deal with the case $k=0$. For $y\in V_0$, let $p'_{x,y}$ be the probability that $A(x)=A(y)=1$ and $\rand{\pi}_{x_q}= \rand{\pi}_{y_{q'}}$ for all $q,q'$ and $p''_{x,y}$ the probability that $A(x)=A(y)=1$ and $\rand{\pi}_{x_q}\ne \rand{\pi}_{y_{q'}}$ for any $q,q'$.  Observe that $p_{x,y}=p'_{x,y}+p''_{x,y}$.  By the same reasoning as above, we find that $p'_{x,y}=0$ if $2j>m$ and otherwise it is $\f{(m-1)!(mn-2j)!}{(m-2j)!(mn-1)!}=O(t^j n^{-2j+1})$.  From this and the same reasoning as before, we get
		\[\sum_{y\in V_0} p'_{x,y}=O(k^{2j}n^{-2j+1})=O(t n^{-j+1})=O(\gam n^{-j+1-1/m}).\]
		
		It remains to bound $\sum_{y\in V_0}p_{x,y}''-p_xp_y$, and here we will need to be somewhat careful.  By first conditioning on the event $A(x)=1$, we see that 
		\begin{align*}
			p_{x,y}''&=p_x \cdot \f{mn-m}{mn-j}\cdot \f{m-1}{mn-j-1}\cdots \f{m-j+1}{mn-2j+1}\le p_x \cdot 1 \cdot \f{(m-1)!(mn-2j)!}{(m-j)!(mn-j)!}\\ &\le p_x \f{(m-1)!(mn-2j)!}{(m-j)!(mn)!}(mn)^{j}.
		\end{align*}
		By \eqref{eq:px} we have 
		\[
		p_xp_y\ge p_x \f{(m-1)!(mn-2j)!}{(m-j)!(mn)!}(mn-2j)^j, 
		\]
		and using $|V_0|=O(t^j)$ and \eqref{eq:px} we find
		\begin{align*}
			\sum_{y\in V_0} (p''_{x,y}-p_xp_y)&= O(t^j)\cdot p_x\cdot \f{(m-1)!(mn-2j)!}{(m-j)!(mn)!}\cdot ((mn)^j-(mn-2j)^j)\\&=O(t^{j}\cdot n^{-j+1}\cdot  n^{-2j}\cdot n^{j-1})=O(t^{j}n^{-2j})=O(\gam^jn^{-j-1})=O(n^{-j})=O(n^{-j+1-1/m}),
		\end{align*}
		where this second to last step used that $\gam n^{1-1/j}\le mn$ implies $\gam^j=O(n)$.
		
		In total then by \eqref{eq:compVar},  \eqref{eq:rewrite}, and \eqref{eq:mu}, we have
		\[\sig^2\le \mu+ O(t^j\cdot n^{-j+1-1/m})=\mu +O(\gam^jn^{-1/m})=\mu+o(\mu).\]
		In particular, for $n$ sufficiently large we have $\sig^2\le 4\mu$ (and the asymptotic bound of the lemma is trivial otherwise). Thus by Chebyshev's inequality, we find
		\[
		\Pr[S=0]\le \Pr[|S-\mu|\ge \mu^{1/2} \sig/2]\le 4\mu^{-1}=O(\gam^{-j}),
		\]
		giving the desired result.
	\end{proof}

	\section{The Partial Feedback Model}\label{sec:partial} 
	\subsection{Definitions and Outline}
	Throughout this section we fix a guessing strategy $\c{G}$ and a suitable $\ep=\ep(m)>0$ which will be on the order of $m^{-1/4} \log^{1/4} m$.  Our goal is to prove for large enough $n$ that $\E[P(\c{G},\rand{\pi})] \le (1+ O(\ep))m$. In this section, we simply refer to the partial feedback model as ``the game.''
	
	A {\it history} $h=(g,y)$ of a completed game is a pair of vectors: the $[n]$-valued vector $g$ of all $mn$ guesses made throughout the game, and the boolean vector $y$ of feedback received, so that $y_t = 1$ if and only if the $t$-th card in the deck has value $g_t$. A {\it history at time $t$}, denoted $h_t$, is a truncation of some complete history $h$ to the first $t$ values in each vector, representing all the information available to the guesser after they make the $t$-th guess.
	
	We let $H$ denote a sample of the history of the game given the fixed strategy $\c{G}$ and that the deck is shuffled according to a uniform random $\rand{\pi}\sim \S_{m,n}$. Similarly $H_t$ denotes a sample of the history of the game at time $t$.
	
	Given a history $h=(g,y)$, we write $Y(h)\coloneqq \|y\|$ for the total number of correct guesses, where here and throughout this chapter $\|v\|:=\sum |v_i|$ denotes the $\ell^1$ norm.  Define $a_i(h)\coloneqq |\{t : g_t = i\}|$ to be the number of times card type $i$ has been guessed, and $m_i(h) \coloneqq m - |\{t : g_t = i \text{ and } y_t = 1\}|$ to be the number of copies of card $i$ left to be found in the deck. For a partial history $h_t$, the values $Y(h_t)$, $a_i(h_t)$, and $m_i(h_t)$ are defined in the same way.
	
	We are ready to outline the proof.  The first and most important step is to prove the following ``pointwise'' lemma, which roughly shows that for all typical histories $h_{t-1}$, the probability that the $t$-th guess is correct is at most $(1+o(1))n^{-1}$.
	
	\begin{lem}\label{lem:pointwise}
		For any history $h_{t-1}$ of the game up to time $t-1$ and any $i\in [n]$,
		\[
		\Pr[\rand{\pi}_t = i | H_{t-1} = h_{t-1}]\le \frac{m_i(h_{t-1})}{mn-a_i(h_{t-1}) - Y(h_{t-1})}.
		\]
	\end{lem}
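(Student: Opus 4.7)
The plan is to prove this pointwise bound by a combinatorial switching argument on the set of permutations consistent with the observed history. Let $\Pi = \Pi(h_{t-1})$ denote the set of $\pi \in \S_{m,n}$ that, when played against the (deterministic) strategy $\c{G}$, produce the history $h_{t-1}$ in the first $t-1$ steps, and let $\Pi_i = \{\pi \in \Pi : \pi_t = i\}$. Since $\rand{\pi}$ given $H_{t-1} = h_{t-1}$ is uniform on $\Pi$, the target probability equals $|\Pi_i|/|\Pi|$; abbreviating $a_i, m_i, Y$ for $a_i(h_{t-1}), m_i(h_{t-1}), Y(h_{t-1})$, my goal reduces to the counting inequality $(mn - a_i - Y)\, |\Pi_i| \le m_i\, |\Pi|$.

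First I would label positions by type. The set $C = \{s < t : y_s = 1\}$ of correct-guess positions is fixed ($\pi_s = g_s$), while $I = \{s < t : y_s = 0\}$ carries only the constraint $\pi_s \ne g_s$; the undetermined positions form $U = I \cup \{t, \ldots, mn\}$, and the positions on which value $i$ may legally be placed form $\tilde U = \{s \in U : s \ge t \text{ or } g_s \ne i\}$. A routine count gives $|\tilde U| = mn - a_i - Y + (m - m_i) \ge mn - a_i - Y$, using that $a_i - (m - m_i)$ equals the number of past incorrect guesses of $i$. Moreover, every $\pi \in \Pi$ places exactly $m_i$ copies of $i$ among positions in $U$, since the remaining $m - m_i$ copies are forced into $C$.

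The crux is an injection $\phi\colon \Pi_i \times \tilde U \hookrightarrow \{(\pi,s) : \pi \in \Pi,\ s \in U,\ \pi_s = i\}$. For $(\pi', s) \in \Pi_i \times \tilde U$, set $\phi(\pi', s) = (\pi, s)$ where $\pi = \pi'$ if $s = t$ or $\pi'_s = i$, and otherwise $\pi$ is obtained from $\pi'$ by swapping the values at positions $s$ and $t$. In every case $\pi_s = i$, and $\pi$ remains in $\Pi$: the only modified positions are $s$ and $t$, where position $t$ carries no forbidden value and the value $i$ placed at $s$ is allowed by definition of $\tilde U$. Injectivity follows because the second coordinate is preserved and, for each fixed $s$, one recovers $\pi' = \pi$ when $\pi_t = i$ and $\pi'$ equal to the swap of $\pi$ at $s, t$ when $\pi_t \ne i$.

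These observations together give $|\Pi_i| \cdot |\tilde U| \le m_i \cdot |\Pi|$, since the codomain has size exactly $m_i |\Pi|$ (each $\pi$ contributes the $m_i$ positions $s \in U$ at which $\pi_s = i$). Dividing and weakening $|\tilde U| \ge mn - a_i - Y$ yields the claimed bound, and in fact the slightly stronger inequality $\Pr[\rand{\pi}_t = i \mid H_{t-1} = h_{t-1}] \le m_i/|\tilde U|$. The main subtlety I anticipate is the edge case $\pi'_s = i$, where $\phi$ acts trivially: to verify genuine injectivity, I must check carefully that the two preimage branches ($\pi_t = i$ versus $\pi_t \ne i$) cover disjoint parts of the image, so that no pair $(\pi, s)$ is double-counted.
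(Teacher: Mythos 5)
Your proof is correct and takes a genuinely different route from the paper's. The paper first translates $\Pr[\rand{\pi}_t = i \mid H_{t-1} = h_{t-1}]$ into a statement about the last entry of an $({\bf m},{\bf a})$-permutation, makes a further technical reduction to the case where value $1$ has no forbidden positions (by conditioning on the first block and passing to a convex combination), proves the ratio inequality $f_1({\bf m},{\bf a})/f_i({\bf m},{\bf a}) \le m_1/m_i$ by partitioning permutations ending in $1$ or $i$ into orbits under a cyclic shift of the $1$'s and $i$'s, and finally sums over $i$ using $\sum_i f_i = 1$. You skip both reductions and the ratio normalization entirely: you inject $\Pi_i \times \tilde U$ directly into $\{(\pi,s) : \pi \in \Pi,\ s \in U,\ \pi_s = i\}$ via a single transposition of positions $s$ and $t$, and the case analysis you flag as the main subtlety does go through --- the swap branch always yields $\pi_t = \pi'_s \neq i$ while the no-swap branch always yields $\pi_t = i$, so the two branches have disjoint images and each is separately invertible. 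Both arguments are bijective at heart, but yours is more self-contained and incidentally delivers the slightly sharper denominator $mn - a_i(h_{t-1}) - Y(h_{t-1}) + (m - m_i(h_{t-1}))$, correcting exactly the double-counting that the paper remarks on after the lemma statement but does not pursue; the paper's orbit argument, in exchange, isolates the cleaner structural fact $f_1/f_i \le m_1/m_i$, which is of some independent interest.
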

	
	Note that the fraction on the right hand side is a natural estimate for $\Pr[\rand{\pi}_t = i|H_{t-1}=h_{t-1}]$: the numerator is exactly the number of copies of $i$ in the deck that have yet to be found, and the denominator is approximately the total number of positions among $[mn]$ at which such a copy could lie (this may not be exact because $a_i(h_{t-1})$ and $Y(h_{t-1})$ can count the same position twice). We use a simple bijective argument to prove Lemma~\ref{lem:pointwise} in Section~\ref{subsec:pointwise}.
	
	The second step of the proof is to show that the term $Y(H_{t-1})$ in Lemma~\ref{lem:pointwise} is negligible with high probability, which is done by the following lemma.
	\begin{lem}\label{lem:weak}
		For any $0 < \lam \le 1/6$, $n^{1/2}\ge 12\lam^{-1}$, and any fixed strategy $\c{G}$,
		\[\Pr[P(\c{G},\rand{\pi}) > \lam mn]\le 2e^{-mn^{1/2}}.\]
	\end{lem}
	This bound is proved in Section~\ref{subsec:weak} using Lemma~\ref{lem:pointwise} and Chernoff bounds. Combining Lemmas~\ref{lem:pointwise} and~\ref{lem:weak}, and since $Y(h_{t-1})\le Y(h)$, we see that with high probability for any $\ep>0$,
	\[
	\Pr[\rand{\pi}_t = i]\le \frac{m_i(H_{t-1})}{(1-\ep)mn-a_i(H_{t-1})}.
	\]
	We now break guesses into three types, based on how many times a given card $i$ has already been guessed. A guess at time $t$, say with $g_t=i$, is called {\it subcritical} if $a_i(H_{t-1}) < \ep mn$, {\it critical} if $\ep mn \le a_i(H_{t-1})<(1-\ep)mn$, and {\it supercritical} if $a_i(H_{t-1}) \ge (1-\ep) mn$. Note that if even a single supercritical guess is made, then almost all guesses must have been of that same card type, which makes the situation easy to analyze.
	
	By adaptively re-numbering the cards during the game if necessary, we may assume without loss of generality that if there are $k$ card types for which critical guesses are made, then they are exactly the first $k$ cards $1,\ldots, k$.  For any given history $h$, let $b_0(h)$ be the number of subcritical guesses made, let $b_i(h)$, $1\le i \le k$ be the number of critical guesses made with $g_t=i$, and let $b_\infty(h)$ be the number of supercritical guesses made. Define $Y_0(h)$, $Y_i(h)$, and $Y_\infty(h)$ to be the number of correct guesses made in each regime.
	
	We finish the proof by showing with high probability that each of the $Y_i(H)$ values are not much larger than their means.  The subcritical guesses $Y_0(H)$ are handled in Section~\ref{sec:subcritical}, the critical guesses $Y_i(H)$ in Section~\ref{sec:critical}, and the supercritical regime is simple enough to not merit its own subsection.  The proof is then completed in Section~\ref{sec:end-proof}.
	
	Throughout the proof we will often omit floors and ceilings for ease of presentation.  For an event $E$ we let $\ol{E}$ denote its complement.  For real valued random variables $X$ and $Y$, we write $X \succeq Y$ if $X$ stochastically dominates $Y$, i.e. if for all $x\in \mathbb R$, $\Pr[X \ge x] \ge \Pr[Y\ge x]$.  We also recall a standard variant of the Chernoff bound, see for instance \cite{kuszmaul2021multiplicative}.
	
	\begin{lem}\label{lem:chern}
		Let $B(N,p)$ be a binomial random variable with $N$ trials and probability of success $p$.  Then for all $\lam>0$, 
		\[
		\Pr[B(N,p)> (1+\lam) pN]\le e^{-\f{ \lam^2pN}{2+\lam}}.
		\] 
	\end{lem}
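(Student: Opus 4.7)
The plan is to prove this standard Chernoff tail bound via the Cramér–Chernoff exponential-moment method. Write $B = B(N,p) = \sum_{i=1}^N X_i$ as a sum of $N$ i.i.d.\ Bernoulli$(p)$ variables, and fix an auxiliary parameter $t>0$ to be optimized. The point of introducing $t$ is that $e^{tB}$ is a nonnegative random variable, so Markov's inequality gives
\[
\Pr[B>(1+\lam)pN] \;=\; \Pr\bigl[e^{tB}> e^{t(1+\lam)pN}\bigr] \;\le\; e^{-t(1+\lam)pN}\,\E[e^{tB}],
\]
and the key gain over direct Markov on $B$ itself is that the moment generating function on the right factorizes cleanly.

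Next I would compute the MGF explicitly: by independence,
\[
\E[e^{tB}] \;=\; \bigl(pe^t + (1-p)\bigr)^N \;=\; \bigl(1+p(e^t-1)\bigr)^N \;\le\; \exp\!\bigl(pN(e^t-1)\bigr),
\]
where the last inequality uses $1+x\le e^x$. Substituting this into the Markov bound yields
\[
\Pr[B>(1+\lam)pN]\;\le\;\exp\!\bigl(pN(e^t-1)-t(1+\lam)pN\bigr),
\]
and the exponent is minimized over $t>0$ by the choice $t=\log(1+\lam)$, which produces the sharp Chernoff estimate
\[
\Pr[B>(1+\lam)pN]\;\le\;\exp\!\bigl(-pN\bigl((1+\lam)\log(1+\lam)-\lam\bigr)\bigr).
\]

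The final step is to downgrade this tight but awkward rate function to the clean quadratic rate $\lam^2/2$ asserted in the lemma, which requires an elementary one-variable calculus estimate of the form $(1+\lam)\log(1+\lam)-\lam \ge \half\lam^2$ on the range of interest (for instance, after an expansion $\log(1+\lam)=\lam-\lam^2/2+\cdots$ and careful sign bookkeeping). This is the step I expect to require the most care, since the inequality is essentially tight at $\lam=0$ and one needs to track the second-order behaviour precisely; this is the only place where the specific constant $\half$ in the statement is used, and the rest of the argument is entirely mechanical. Once this analytic inequality is in hand, the lemma follows by substitution.
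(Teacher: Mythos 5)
Your Cram\'er--Chernoff derivation is the right machinery, and everything through the optimization over $t$ is correct: it yields
\[
\Pr[B>(1+\lambda)pN]\le \exp\bigl(-pN\,\varphi(\lambda)\bigr),\qquad \varphi(\lambda):=(1+\lambda)\log(1+\lambda)-\lambda.
\]
The problem is the calculus estimate you flag at the end, $\varphi(\lambda)\ge\tfrac12\lambda^2$: it is false, and in the \emph{opposite} direction. Writing $f(\lambda)=\varphi(\lambda)-\tfrac12\lambda^2$, we have $f(0)=0$ and $f'(\lambda)=\log(1+\lambda)-\lambda\le 0$, so $\varphi(\lambda)\le\tfrac12\lambda^2$ for every $\lambda\ge 0$, and the gap is dramatic for large $\lambda$, where $\varphi(\lambda)\sim\lambda\log\lambda\ll\tfrac12\lambda^2$. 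The rate $\tfrac12\lambda^2$ is the \emph{lower}-tail Chernoff rate; what this method actually gives for the upper tail is $\varphi(\lambda)\ge\lambda^2/(2+2\lambda/3)$ for all $\lambda\ge 0$, or $\varphi(\lambda)\ge\lambda^2/3$ on $0\le\lambda\le 1$.

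In fact the lemma is slightly too strong as literally stated, so no proof can close the gap: for example with $N=100$, $p=0.01$, $\lambda=10$, the claimed right-hand side is $e^{-50}\approx 2\times10^{-22}$, while the true tail probability $\Pr[B\ge 12]$ is about $5\times10^{-10}$. The paper itself gives no proof of this lemma (it is cited from \cite{AS}, whose upper-tail bound carries the weaker denominator $2+O(\lambda)$ in the exponent), and its downstream applications are insensitive to the particular positive constant there, so this is a cosmetic slip in the paper rather than a mathematical one. But your final step cannot be completed as written; you would need to replace $\tfrac12\lambda^2$ by $\lambda^2/(2+2\lambda/3)$ (or $\lambda^2/3$ for $\lambda\le 1$) and verify that inequality instead, which does hold and is an easy one-variable check.
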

	
	\subsection{The Pointwise Lemma}\label{subsec:pointwise}
	In this section we show Lemma~\ref{lem:pointwise}, which is equivalent to an upper bound on the number of $\pi \in \S_{m,n}$ for which at each position up through $t$, either $\pi_t$ is specified or a single value is disallowed for $\pi_t$.  We reduce to the following setup. 	
	\begin{defn}
		Let ${\bf m}=(m_{1},\ldots,m_{n})$ and ${\bf a}=(a_{1},\ldots,a_{n})$
		be vectors of nonnegative integers satisfying $\|{\bf a}\|<\|{\bf m}\|$.
		An \emph{${\bf m}$-permutation} is a word of length $\|{\bf m}\|$ over alphabet $[n]$
		where $i$ appears exactly $m_{i}$ times. An {\it $({\bf m},{\bf a})$-permutation} $\pi$ is an ${\bf m}$-permutation where the first
		$a_{1}$ terms are not $1$, the next $a_{2}$ terms are not $2$,
		and so on, so that exactly $a_{i}$ terms in $\pi$ are forbidden from taking value $i$. 
	\end{defn}
	
	It is significant that $\|a\| < \|m\|$ strictly in the definition of $({\bf m},{\bf a})$-permutations, guaranteeing that no restrictions are made on the value of the last term. Given a history $h_{t-1}$ up to time $t-1$, we let ${\bf m}$ be the vector $(m_1(h_{t-1}),\ldots, m_n(h_{t-1}))$, and ${\bf a}$
	be the vector $(a_1(h_{t-1}),\ldots, a_n(h_{t-1}))$. We claim that the following bound on $({\bf m},{\bf a})$-permutations implies Lemma~\ref{lem:pointwise}.
	\begin{lem}
		\label{lem:main}If $f_{i}({\bf m},{\bf a})$ is the fraction of all
		$({\bf m},{\bf a})$-permutations for which the last term
		is $i$, then
		\[
		f_{i}({\bf m},{\bf a})\le\frac{m_{i}}{\|{\bf m}\|-a_{i}}.
		\]
	\end{lem}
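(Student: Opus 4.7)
The plan is a bijective double-counting argument. Let $P$ denote the set of all $(\mathbf{m},\mathbf{a})$-permutations and $P_i\subseteq P$ the subset whose last entry is $i$, so $f_i(\mathbf{m},\mathbf{a}) = |P_i|/|P|$. For each $j\in[n]$ let $B_j\subseteq[\|\mathbf{m}\|]$ denote the block of $a_j$ positions at which value $j$ is forbidden. The crucial consequence of the hypothesis $\|\mathbf{a}\|<\|\mathbf{m}\|$ is that the last position $\|\mathbf{m}\|$ lies in none of the $B_j$'s, i.e.\ it belongs to the unrestricted tail.

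I would then compare the two sets of pairs
\[
L := \{(\pi, p) : \pi \in P_i,\ p \in [\|\mathbf{m}\|]\setminus B_i\}, \qquad R := \{(\sigma, q) : \sigma \in P,\ \sigma_q = i\},
\]
whose sizes are $|L| = |P_i|(\|\mathbf{m}\| - a_i)$ and $|R| = |P|\cdot m_i$. The desired inequality is exactly $|L|\le |R|$, so the goal reduces to exhibiting an injection $\Phi\colon L\to R$.

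The key step is the construction and its verification. Given $(\pi,p)\in L$, let $\sigma$ be the word obtained from $\pi$ by swapping the entries at positions $p$ and $\|\mathbf{m}\|$ (so $\sigma=\pi$ if $p=\|\mathbf{m}\|$), and set $\Phi(\pi,p)=(\sigma,p)$. Since $\pi_{\|\mathbf{m}\|}=i$, the swap places $i$ at position $p$, giving $\sigma_p=i$; injectivity is immediate from reversibility, since given $(\sigma,q)$ in the image one reads off $p=q$ and recovers $\pi$ by undoing the swap. The one thing to verify --- and the main (small) obstacle --- is that $\sigma$ is itself an $(\mathbf{m},\mathbf{a})$-permutation: letter multiplicities are preserved by any swap; position $p$ now carries $i$, which is allowed because $p\notin B_i$ (any other block $B_j$ containing $p$ has $j\ne i$, so its ``no $j$'' restriction is not violated); and position $\|\mathbf{m}\|$ lies in the unrestricted tail, so it accepts the old value $\pi_p$. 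With this in hand, $|L|\le|R|$ gives $|P_i|(\|\mathbf{m}\|-a_i)\le m_i|P|$, which is the claim.
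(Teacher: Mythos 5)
Your proof is correct, and it takes a genuinely different and more direct route than the paper's. The paper proves the bound in three stages: first it reduces (by conditioning on the prefix, using convexity) to the case $a_1=0$; then it establishes the ratio inequality $f_1(\mathbf{m},\mathbf{a})/f_i(\mathbf{m},\mathbf{a})\le m_1/m_i$ by partitioning $(\mathbf{m},\mathbf{a})$-permutations ending in $1$ or $2$ into orbits under cyclic shifts of the $1$'s and $2$'s in positions past the first $a_2$; finally it sums the ratio inequality over $i$. Your argument replaces all of this with a single injection $L\to R$ built from the swap of position $p$ with the final position, directly certifying $|P_i|(\|\mathbf{m}\|-a_i)\le m_i|P|$. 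The verification is complete: multiplicities are preserved by the swap, the new entry $i$ at position $p$ is admissible because $p\notin B_i$ while $B_j$-constraints with $j\ne i$ only forbid the value $j$, the displaced entry $\pi_p$ lands at the unrestricted final position (this is exactly where the hypothesis $\|\mathbf{a}\|<\|\mathbf{m}\|$ is used), and injectivity follows since the swap is an involution. The only thing your approach gives up is the intermediate ratio bound $f_1/f_i\le m_1/m_i$, but that fact is not used elsewhere in the paper, so your proof is a clean simplification.
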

	
	Indeed, by definition $f_i({\bf m},{\bf a})$ is the probability that the last card in $\rand{\pi}$ is exactly $i$ given the current history $h_{t-1}$.
	But all positions past the first $t-1$ are indistinguishable, so
	$f_{i}({\bf m},{\bf a})$ is also the probability that the next card (at index $t$) is $i$.  Thus it suffices to prove Lemma~\ref{lem:main}.
	
	\begin{proof}[Proof of Lemma \ref{lem:main}.]
		It suffices to show the lemma for $i=1$. First we make a technical
		reduction to the case $a_{1}=0$ for convenience.  Let $\tilde{\pi}$ be any sequence of $a_{1}$ cards in which
		$1$ does not appear and $i$ appears at most $m_{i}$ times for all $i>1$. Define
		an \emph{$({\bf m},{\bf a},\text{\ensuremath{\tilde{\pi}}})$-permutation
		}to be an $({\bf m},{\bf a})$-permutation where the first
		$a_{1}$ terms agree with $\tilde{\pi}$. 
		
		Define $f_{i}({\bf m},{\bf a},\tilde{\pi})$ to be the fraction of \emph{$({\bf m},{\bf a},\text{\ensuremath{\tilde{\pi}}})$-}permutations
		which have last term $i$. Since $f_{1}({\bf m},{\bf a})$ is some
		convex combination of the values $f_{1}({\bf m},{\bf a},\tilde{\pi})$, it suffices
		to show that for every specific choice of $\tilde{\pi}$,
		\begin{equation}
			f_{1}({\bf m},{\bf a},\tilde{\pi})\le\frac{m_{1}}{\|{\bf m}\|-a_{1}}.\label{eq:1}
		\end{equation}
		
		Let ${\bf m}'$ be the vector of card counts remaining when the cards
		in $\tilde{\pi}$ are taken out, and let ${\bf a}'=(0,a_{2},a_{3},\ldots,a_{n})$,
		so that an\emph{ $({\bf m},{\bf a},\text{\ensuremath{\tilde{\pi}}})$-}permutation
		is just $\tilde{\pi}$ concatenated with an $({\bf m}',{\bf a}')$-permutation
		$\pi'$. Since $m'_{1}= m_{1}$ and $\|{\bf m}'\|=\|{\bf m}\|-a_{1}$, it suffices to show
		\[
		f_{1}({\bf m}',{\bf a}')\le\frac{m_{1}'}{\|{\bf m}'\|},
		\]
		which is just the case $a_{1}=0$ in the original lemma statement.
		Thus, it remains to show that if $a_{1}=0$, we have
		\begin{equation}
			f_{1}({\bf m},{\bf a})\le\frac{m_{1}}{\|{\bf m}\|}.\label{eq:a1eq0}
		\end{equation}
		
		In fact, we will prove that for any $i$,
		\begin{equation}
			\frac{f_{1}({\bf m},{\bf a})}{f_{i}({\bf m},{\bf a})}\le\frac{m_{1}}{m_{i}}.\label{eq:ratio}
		\end{equation}
		
		The case $i=1$ is trivial, so we just need to prove this for $i>1$, and without
		loss of generality we can assume $i=2$. We divide the $({\bf m},{\bf a})$-permutations $\pi$ which
		end in either $1$ or $2$ into classes as follows. For each $\pi$
		which ends in either $1$ or $2$, consider all positions past the
		first $a_{2}$ which contain either a $1$ or a $2$. Let $S(\pi)$
		denote the set of $\pi'$ obtained by cyclically shifting the $1$'s
		and $2$'s in these positions within $\pi$, fixing all other values.  Note that with this we never move a 1 into a forbidden position (as $a_1=0$) nor a 2 into a forbidden position (as we only shift past the first $a_2$ positions).  It follows that every
		$\pi'\in S(\pi)$ is a $({\bf m},{\bf a})$-permutation ending in 1 or 2.
		
		Note that the total number of $2$'s past the first $a_{2}$ positions
		is exactly $m_{2}$, since every $2$ appears past the first $a_{2}$,
		while the total number of $1$'s past the first $a_{2}$ positions
		is at most $m_{1}$, since there are exactly $m_{1}$ $1$'s in total.
		Thus, we see that the fraction of $\pi'\in S(\pi)$ which end in $1$
		is at most $\frac{m_{1}}{m_{1}+m_{2}}$ for every $\pi$. As the $S(\pi)$
		partition all possible $({\bf m},{\bf a})$-permutations
		$\pi$ which end in either $1$ or $2$, (\ref{eq:ratio}) follows
		for $i=2$.
		
		Finally, to derive (\ref{eq:a1eq0}) it suffices to write (\ref{eq:ratio})
		as
		\[
		\frac{m_{i}}{m_{1}}f_{1}({\bf m},{\bf a})\le f_{i}({\bf m},{\bf a})
		\]
		and sum over $i$, noting that $\sum_{i}f_{i}({\bf m},{\bf a})=1$
		since every $({\bf m},{\bf a})$-permutation must end in some $i$.
	\end{proof}

	\subsection{Weak Bound on $\c{P}_{m,n}^+$ }\label{subsec:weak}

	The next  step is to show that the $Y(h_{t-1})$ term in Lemma~\ref{lem:pointwise} is negligible with high probability. Since $Y(h_{t-1})$ is bounded by just $Y(h)$, the total number of cards guessed correctly, it suffices to show a weak upper bound on the total number of correct guesses in the form of Lemma~\ref{lem:weak}.  To do this we first show the following.

	\begin{lem}\label{lem:binomialStochastic}
		Let  $B_1,\ldots,B_k$ be (not necessarily independent) Bernoulli random variables with $\Pr[B_t=1|\sum_{s<t}B_{s}=x]\le p$ for all $t$ and $x$.  Then $\sum_{t=1}^k B_t$ is stochastically dominated by a binomial random variable $B(k,p)$. 
	\end{lem}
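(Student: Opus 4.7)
My plan is to induct on $k$. Let $T_j:=\sum_{s=1}^j B_s$, let $S_j\sim B(j,p)$, and set $r_t(y):=\Pr[B_t=1\mid T_{t-1}=y]$, which by hypothesis satisfies $r_t(y)\le p$ for every $t$ and $y$. The base case $k=0$ is trivial, so assume inductively that $T_{k-1}\preceq S_{k-1}$.

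For any threshold $x\in\Z$, conditioning on $T_{k-1}$ and using $\Pr[T_{k-1}=x-1,\,B_k=1]=\Pr[T_{k-1}=x-1]\,r_k(x-1)$ yields
\[
\Pr[T_k\ge x]=(1-r)\Pr[T_{k-1}\ge x]+r\,\Pr[T_{k-1}\ge x-1],\qquad r:=r_k(x-1)\le p,
\]
with the analogous identity for $S_k$ (replace $r$ by $p$). Writing $a=\Pr[S_{k-1}\ge x]$, $b=\Pr[S_{k-1}\ge x-1]$, $c=\Pr[T_{k-1}\ge x]$, $d=\Pr[T_{k-1}\ge x-1]$, the desired inequality reduces to $(a-c)+p(b-a)-r(d-c)\ge 0$. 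The inductive hypothesis gives $a\ge c$ and $b\ge d$; tail monotonicity gives $b\ge a$ and $d\ge c$. If $b-a\ge d-c$ then $p(b-a)\ge r(d-c)$ and the inequality follows from $a-c\ge 0$. Otherwise $b-d\ge 0$ forces $a-c\ge (d-c)-(b-a)$, after which $1-r\ge 1-p$ closes the bound.

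The one subtlety worth flagging is that $(T_j)$ need not be a Markov chain, since $B_t$ may depend on the full history $B_1,\ldots,B_{t-1}$ rather than only on $T_{t-1}$. However, the recurrence above only uses the smoothed transition $r_k$, which is exactly what the hypothesis controls. An equivalent reformulation is a direct coupling: on i.i.d.\ uniforms $U_t$, set $\tilde B_t:=\1[U_t\le r_t(\tilde T_{t-1})]$ and $B_t':=\1[U_t\le p]$, so that $\sum\tilde B_t\le\sum B_t'\sim B(k,p)$ pointwise while $\sum\tilde B_t$ has the same law as $T_k$ (again by the recurrence).
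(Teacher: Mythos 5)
Your proof is correct and follows essentially the same route as the paper: an induction on $k$ whose key step is a comparison of tail probabilities via the decomposition $\Pr[T_k\ge x]=(1-r)\Pr[T_{k-1}\ge x]+r\,\Pr[T_{k-1}\ge x-1]$ with $r=r_k(x-1)\le p$. The paper packages this inductive step as a separate stochastic-domination lemma (reused later for a hypergeometric comparison) and closes with a convex-combination observation in place of your two-case analysis, but the underlying computation is the same; your remark about the non-Markovian subtlety and the alternative coupling sketch are both accurate.
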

	
	This lemma will be proved by induction. The induction step is the following simple observation.
	
	\begin{lem}\label{lem:dom-induction}
		Let $X,X',Y,Y'$ be integer-valued random variables such that  $X'$ and $Y'$ are $\{0,1\}$-valued, $X\succeq Y$, and for all $x\in \mathbb{Z}$, $(X' | X=x) \succeq (Y' | Y=x)$. Then,
		\[
		X + X' \succeq Y + Y'.
		\]
	\end{lem}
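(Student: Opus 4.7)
\textbf{Proof plan for Lemma~\ref{lem:dom-induction}.} The strategy is a coupling argument. By Strassen's theorem (or just by the standard quantile coupling, which suffices on $\mathbb{Z}$), the hypothesis $X\succeq Y$ lets us realize $X$ and $Y$ on a common probability space so that $X\ge Y$ almost surely. Similarly, for each fixed $x$, the hypothesis $(X'\mid X=x)\succeq (Y'\mid Y=x)$ lets us couple the two conditional distributions so that the coupled variables satisfy $X'\ge Y'$ almost surely. The plan is to glue these couplings together and then verify that the resulting joint realization satisfies $X+X'\ge Y+Y'$ pointwise, which immediately implies $X+X'\succeq Y+Y'$.

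More concretely, I would construct the joint distribution of $(X,X',Y,Y')$ on a common probability space as follows. First sample $(X,Y)$ from the monotone coupling above. Then, conditional on the outcome:
\begin{itemize}
\item If $X=Y=x$, sample $(X',Y')$ from the monotone coupling of the two conditional laws on $\{0,1\}$, which yields $X'\ge Y'$.
\item If $X>Y$, sample $X'$ from $(X'\mid X=x)$ and $Y'$ from $(Y'\mid Y=y)$ independently (the joint law here is irrelevant).
\end{itemize}
The marginal law of $(X,X')$ is clearly the one specified, and similarly for $(Y,Y')$, so this is a valid coupling of the four variables.

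It remains to check $X+X'\ge Y+Y'$ almost surely. If $X=Y$, then by construction $X'\ge Y'$ and we are done. If $X>Y$, then since $X,Y$ are integer-valued we have $X\ge Y+1$, and since $X',Y'\in\{0,1\}$ we have $Y'-X'\le 1$; hence
\[
(X+X')-(Y+Y')=(X-Y)-(Y'-X')\ge 1-1=0.
\]
Either way $X+X'\ge Y+Y'$ pointwise on the coupled space, which gives the stochastic dominance $X+X'\succeq Y+Y'$. The only real ``obstacle'' is recognizing that the case $X>Y$ does not require any relation between $X'$ and $Y'$ because the integer gap of at least one absorbs the maximum possible deficit of $Y'-X'=1$; the $\{0,1\}$-valued hypothesis on $X',Y'$ is used precisely here.
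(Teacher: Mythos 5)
Your proof is correct, and it takes a genuinely different route from the paper. You argue by coupling: you realize $X\ge Y$ almost surely via the monotone (quantile) coupling, then in the event $X=Y=x$ you use the hypothesis $(X'\mid X=x)\succeq(Y'\mid Y=x)$ to also couple $X'\ge Y'$, while in the event $X>Y$ you observe that the integer gap $X-Y\ge 1$ absorbs the worst-case deficit $Y'-X'\le 1$ and so no coupling of $(X',Y')$ is needed. This is sound: the marginals of $(X,X')$ and $(Y,Y')$ are preserved because in every branch $X'$ is drawn from the law of $X'$ conditioned on the realized value of $X$ (and similarly for $Y'$), and the pointwise inequality $X+X'\ge Y+Y'$ on the coupled space gives the dominance. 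The paper instead computes the tail $\Pr[X+X'\ge y]$ directly, splitting it as $\Pr[X\ge y]+\Pr[X=y-1]\Pr[X'=1\mid X=y-1]$, applying the hypothesis to replace the conditional probability, and then bounding $\Pr[X\ge y]+t\Pr[X=y-1]\ge\Pr[Y\ge y]+t\Pr[Y=y-1]$ for any $t\in[0,1]$ by taking a convex combination of the two inequalities $\Pr[X\ge y]\ge\Pr[Y\ge y]$ and $\Pr[X\ge y-1]\ge\Pr[Y\ge y-1]$. Your coupling argument is more conceptual and makes the role of integer-valuedness and the $\{0,1\}$ hypothesis especially vivid; the paper's CDF computation is more self-contained, avoiding even the (elementary) appeal to the existence of monotone couplings, and its convex-combination step is the clever analogue of your case split.
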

	
	\begin{proof}
		Our goal is to show that for any $y \in \mathbb{Z}$, $\Pr[X + X' \ge y] \succeq \Pr[Y+Y' \ge y]$. But clearly
		\begin{align}
			\Pr[X + X' \ge y] & = \Pr[X \ge y] + \Pr[(X = y-1) \wedge (X' = 1)] \nonumber \\
			& = \Pr[X \ge y] + \Pr[X = y-1] \Pr[X'=1 | X = y-1]\nonumber \\
			& \ge \Pr[X \ge y] + \Pr[X = y-1] \Pr[Y'=1 | Y = y-1] \nonumber\\
			& \ge \Pr[Y \ge y] + \Pr[Y = y-1] \Pr[Y'=1 | Y = y-1]\label{eq:line-critical}\\
			& = \Pr[Y+Y' \ge y].\nonumber
		\end{align}
		Here only (\ref{eq:line-critical}) is worth explaining. Since $X\succeq Y$ we have $\Pr[X \ge y] \ge \Pr[Y \ge y]$ and $\Pr[X \ge y] + \Pr[X = y-1]\ge \Pr[Y \ge y] + \Pr[Y = y-1]$, so by taking convex combinations of these two inequalities, we have for any $t \in [0,1]$, $\Pr[X \ge y] + t \Pr[X = y-1]\ge \Pr[Y \ge y] + t \Pr[Y = y-1]$ as well. Taking $t= \Pr[Y'=1 | Y = y-1]$ completes the proof.
	\end{proof}
	Lemma~\ref{lem:binomialStochastic} follows by iterating Lemma~\ref{lem:dom-induction} with $X=\sum_{s<t} B_{s}$, $X'=B_t$, $Y$ a binomial random variable $B(t-1,p)$, and $Y'$ a Bernoulli random variable with probability $p$.  We omit the details.
	
	We next prove the following, which immediately implies Lemma~\ref{lem:weak}.
	
	\begin{lem}\label{lem:techWeak}
		For any $0 < \lam \le 1/6$, $n\ge 200\lam^{-1}$, and any fixed strategy $\c{G}$,
		\[\Pr[P(\c{G},\rand{\pi}) > \lam mn]\le 2e^{-\lam mn/12}.\]
	\end{lem}
	\begin{proof}
		We first show that few correct guesses are made in the first third of the game, i.e. when $t\le mn/3$.  In this case we apply Lemma~\ref{lem:pointwise} to find that for any $i \in [n]$,
		\[
		\Pr[\rand{\pi}_t = i | H_{t-1} = h_{t-1}]\le \frac{m_i(h_{t-1})}{mn-a_i(h_{t-1}) - Y(h_{t-1})} \le \frac{m}{mn - mn/3 - mn/3} = \frac{3}{n},
		\]
		since up to this point there have been at most $mn/3$ correct guesses and each $i$ has been guessed at most $mn/3$ times. It follows that for $t \le mn/3$, conditional on any $h_{t-1}$, the probability that the $t$-th guess is correct is at most $3/n$. In particular the $t$-th guess is correct with probability at most $3/n$ regardless of the value of $Y(H_{t-1})$, so by Lemma~\ref{lem:binomialStochastic} the number of correct guesses in the first third of the game $Y(H_{mn/3})$ is stochastically dominated by a binomial random variable $B(mn/3, 3/n)$. Applying Lemma~\ref{lem:chern} gives for all $\del\ge 2$,
		\[
		\Pr[Y(H_{mn/3}) > (1+\delta)m] \le \Pr[B(mn/3, 3/n) > (1+\delta)m] \le e^{-\del m/2}. 
		\]
		Taking $\delta = \lam n /4 - 1 \ge \lam n/6\ge 2$ since $n\ge 12\lam^{-1}$, we find
		\begin{equation}\label{eq:dumb1}
			\Pr[Y(H_{mn/3}) > \lam mn/4] \le e^{-\lam mn/12}. 
		\end{equation}

		Let $T$ be the set of $i$ such that $a_i(h_t)< mn/4$ for all $t$, and note that there are at most four card types not in $T$ (since only $mn$ total guesses are made).  Let $E$ be the event that $Y(H_{mn/3})\le \lam mn/4$, and observe that conditional on $E$ we have $Y(H_t)\le (2/3 + \lam/4 )mn$ for all $t$ since at most $2mn/3$ correct guesses can be made in the last $2/3$ of the game. Thus by Lemma~\ref{lem:main} and the above observations, we have for $i\in T$, all $t > mn/3$, and any possible history $h_{t-1}$ for which $E$ occurs,
		\begin{equation}\label{eq:dumb2}
			\Pr[\rand{\pi}_t = i | H_{t-1} = h_{t-1}]\le \frac{m}{mn - mn/4 - (2/3 + \lam/4)mn} \le \frac{24}{n},
		\end{equation}
		where we used $\lam\le 1/6$.
		
		Let $Y'(H)$ denote the total number of correct guesses of card types $i\in T$ and let $Y''(H)$ denote the total number of correct guesses involving $i\notin T$.  Observe that \[Y(H)=Y'(H)+Y''(H)\le Y'(H)+4m\le Y'(H)+\lam mn/2,\] where this last step used $n\ge 8\lam^{-1}$ (which is implicit in our hypothesis of the lemma).
		By \eqref{eq:dumb2} we see that conditional on $E$, $Y'(H) - Y(H_{mn/3})$ is stochastically dominated by a binomial random variable $B(2mn/3, 24/n)$.  Thus
		\begin{align*}
			\Pr[Y(H)>\lam mn]&\le \Pr[Y'(H)>\lam mn/2]\le \Pr[Y'(H) - Y(H_{mn/3}) >\lam mn/4|E]+\Pr[\ol{E}] \\ 
			&\le \Pr[B(2mn/3, 24/n) > \lam mn/4] + \Pr[\ol{E}] \le e^{-\lam mn/12}+e^{-\lam mn/12},
		\end{align*}
		where the last inequality used the Chernoff bound with $\del=\lam n/64-1 \ge \lam n /96\ge 2$  and \eqref{eq:dumb1}.
	\end{proof}
	
	\subsection{Concentration of Subcritical Guesses}\label{sec:subcritical}
	In this section we handle the subcritical guesses.  If $X_t$ denotes the indicator variable that the $t$-th subcritical guess is correct, then intuitively the $X_t$ variables are dominated by Bernoulli random variables with parameter $p=\rec{(1-2\ep)n}$, so the total number of correct subcritical guesses is dominated by a binomial distribution $B(b_0(H), p)$, where we recall that $b_0(H)$ is the number of subcritical guesses in history $H$.  
	
	We would like to say that this  binomial distribution is close to its expectation with high probability. It is not enough, however, to prove this for a fixed binomial distribution. The main technical issue is that the number of trials $b_0(H)$ can be chosen adaptively by the guesser. For example, they can use a strategy where they repeatedly make subcritical guesses until they have guessed an above average number of cards correctly.  This is essentially equivalent to the guesser simulating a summation of Bernoulli random variables $\sum_{t=1}^{mn}B_t$, and then choosing some number of trials $b\le mn$ such that the number of correct subcritical guesses is $\sum_{t=1}^{b}B_t$.  We thus wish to show that for $B_t$ a sequence of independent Bernoulli variables, $\sum_{t=1}^b B_t$ is not much larger than its expectation for all large $b$.  With this, no matter how the guesser chooses $b$, they can never do much better than $pb$.
	
	A weak upper bound for this probability comes from applying the Chernoff bound to all $b\le mn$ and then using a union bound.  Unfortunately when $p$ is very small this upper bound is not effective.  A more careful application of the union bound gives the following technical result, where we think of the $Z_k$'s in its statement as centered binomial random variables with $k$ trials. 
	
	\begin{lem}\label{lem:union}
		Let $0\le p\le 1$, $c,c'>0$, and let $0\equiv Z_0,Z_1,Z_2,\ldots$ be random variables such that $Z_k-Z_{k-1}\ge -p$ for all $k$, and such that for all integers $0\le k'<k$ and all $0<\lam\le 1$,
		\[\Pr[Z_k-Z_{k'}> \lam p(k-k')]\le c' e^{-c\lam^2p(k-k')}.\]
		Then for all $0<\lam\le 1$ and integers $k_1\ge k_0\ge 2\lam^{-1}$, we have
		\[\Pr[\exists k\in [k_0,k_1],Z_k>\lam pk]\le \f{8c'k_1}{\lam k_0} e^{-\rec{256} c\lam^3 pk_0}.\]
	\end{lem}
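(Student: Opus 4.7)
}

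The plan is a peeling union-bound tailored to the one-sided increment control $Z_k - Z_{k-1} \ge -p$. The core observation is that the drift-lower-bound allows us to reduce the event ``$Z_k > \lam pk$ for some $k$ in an interval'' to an event about $Z$ only at the right endpoint of that interval, after which the tail hypothesis supplies an exponential bound at each endpoint.

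First, I would discretize $[k_0, k_1]$ with an additive partition: pick a step $d$ of order $\lam k_0$ (say $d = \lceil \lam k_0/C \rceil$ for a suitable constant $C$) and set $q_j = k_0 + jd$ for $0 \le j \le L$, where $L$ is the smallest integer with $q_L \ge k_1$. The assumption $k_0 \ge 2\lam^{-1}$ ensures $\lam k_0 \ge 2$, hence $d \ge 1$ is a well-defined integer step, and $L \le (k_1 - k_0)/d + 1 = O(k_1/(\lam k_0))$. This additive (rather than geometric) choice is precisely what yields the prefactor $k_1/(\lam k_0)$ appearing in the target.

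Next, on each block $[q_j, q_{j+1}]$ I would use the hypothesis $Z_k - Z_{k-1} \ge -p$ iterated to get $Z_k \le Z_{q_{j+1}} + p(q_{j+1} - k) \le Z_{q_{j+1}} + pd$. Consequently, if $Z_k > \lam pk$ for some $k \in [q_j, q_{j+1}]$, then
\[
Z_{q_{j+1}} > \lam p k - pd \ge \lam p q_j - pd.
\]
By choice of $d$ (small constant times $\lam k_0$), the ratio $q_{j+1}/q_j$ stays within $1 + O(1/C)$ uniformly in $j$, so $\lam p q_j - pd \ge \mu\, p\, q_{j+1}$ for some $\mu = \lam(1 - O(1/C))$, again uniform in $j$. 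Thus the block event is contained in $\{Z_{q_{j+1}} > \mu p q_{j+1}\}$, to which I apply the given tail hypothesis with $k' = 0$, obtaining a bound $c' e^{-c\mu^2 p q_{j+1}} \le c' e^{-\Omega(\lam^2 p k_0)}$ since $q_{j+1} \ge k_0$. Summing over the $L = O(k_1/(\lam k_0))$ blocks via a union bound yields the claimed estimate (the stated $\lam^3$ in the exponent is weaker than the $\lam^2$ produced by this argument, so the bound follows \emph{a fortiori}).

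The main obstacle is purely one of bookkeeping: carefully tuning $C$ and $d$ so that (i) $\mu$ is bounded below by a fixed fraction of $\lam$ across every block, using the key chain of inequalities $q_j \ge k_0$ and $q_{j+1} - q_j = d \le \lam k_0/C \le \lam q_j/C$; (ii) the number of blocks $L$ can be absorbed into the clean constant $8k_1/(\lam k_0)$, which uses the condition $k_0 \ge 2\lam^{-1}$ to ensure $k_1/(\lam k_0)$ is large enough that the additive ``$+1$'' from the ceiling is dominated; and (iii) the exponent's final numerical constant $1/256$ is reached by choosing $C$ appropriately. All other steps are straightforward given Lemmas of Chernoff type and the monotonicity of the one-sided increments.
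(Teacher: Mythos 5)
Your approach is genuinely different from the paper's, and the difference is worth spelling out. The paper partitions $[0,k_1]$ into blocks of length roughly $\ell = \tfrac12\lam k_0$ and applies the tail hypothesis to the \emph{increments} $Z_{x_a}-Z_{x_{a-1}}$, which have length $\Theta(\lam k_0)$; this is where the $\lam^3$ in the exponent comes from, since $\lam^2\cdot(\text{block length})\sim\lam^3 k_0$. You instead anchor the tail bound at $k'=0$ and apply it to $Z_{q_{j+1}}$ directly, so the relevant length is $q_{j+1}\ge k_0$, which for small $\lam$ is \emph{longer} than a block and yields the sharper exponent $\lam^2 p k_0$. For $\lam\le 1$ this is a real improvement and your \emph{a fortiori} remark is valid.

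However there is a genuine gap for $\lam>1$, and the lemma as stated claims to hold for all $\lam>0$. Two things go wrong. First, your assertion that $\lam p q_j - pd \ge \mu p q_{j+1}$ with $\mu = \lam(1-O(1/C))$ requires $(\lam-\mu)q_j \ge (1+\mu)d$; with $\mu$ a constant fraction of $\lam$ and $d=\lam k_0/C$, this forces $(\lam/C)k_0 \gtrsim (1+\lam)(\lam k_0/C)$, i.e.\ $1\gtrsim 1+\lam$, which is false for $\lam\gtrsim 1$. Concretely, to have $\mu>0$ at all you need $d < \lam k_0/(\lam+1)$; when $\lam\ge 2$ and $k_0=1$ (a legal choice since $k_0\ge 2\lam^{-1}$ then only demands $k_0\ge 1$), this forces $d<1$, so no integer step exists and the peeling cannot even begin. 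Second, the claim that the $\lam^2$ exponent subsumes the $\lam^3$ exponent \emph{a fortiori} reverses for $\lam>1$: $e^{-c\lam^2 pk_0}$ is then strictly weaker than $e^{-c\lam^3 pk_0/256}$ once $\lam>256$, and the prefactor you would need (of order $Ck_1/(\lam k_0)$ with $C\gtrsim \lam$) is also larger than $8k_1/(\lam k_0)$. So as written, your argument proves a correct and in fact sharper statement for $\lam\le 1$, but it does not establish the lemma for general $\lam$, and the step ``$\mu=\lam(1-O(1/C))$'' would need to be flagged and restricted. The paper's telescoping trick avoids this because the per-increment exponent $\lam^2 p(x_a-x_{a-1})\sim\lam^3 pk_0$ degrades gracefully as $\lam$ grows, rather than requiring $d$ to shrink below $1$.
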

	\begin{proof}
		Define $\ell=\half \lam k_0\ge 1$.  The idea of the proof is to take a union bound over the events $Z_{\ell a}-Z_{\ell(a-1)}> \lam  p\ell$ for all integers $a\le \f{k_1}{\ell}$, which will turn out to be strong enough to conclude the stated result.  To be precise, let $0=x_0< x_1<\cdots<x_r=k_1$ be any sequence of integers such that $\half \ell\le x_a-x_{a-1}\le \ell$ for all $a>0$, and note that the number of terms in this sequence satisfies \begin{equation}\label{eq:r}r\le \ceil{2k_1/\ell}\le \f{8k_1}{\lam k_0}.\end{equation}  Let $E$ be the event that $Z_{x_b}> \rec{8}\lam pb \ell$ for some $b$.  Observe that $Z_{x_b}=\sum_{a=1}^b Z_{x_a}-Z_{x_{a-1}}$, so $Z_{x_b}> \rec{8} \lam pb \ell$ implies that some $a\le b$ has 
		\[
		Z_{x_a}-Z_{x_{a-1}}> \rec{8} \lam p\ell\ge \rec{8}\lam p(x_a-x_{a-1}).
		\] 
		Thus by the union bound, the hypothesis of the lemma, the fact that $x_a-x_{a-1}\ge \half \ell$, and inequality \eqref{eq:r}, we have 
		\[
		\Pr[E]\le \sum_{a=1}^{r} \Pr[Z_{x_a}-Z_{x_{a-1}}> \rec{8}\lam p(x_a-x_{a-1})]\le r\cdot c'e^{-\rec{128} c\lam^2 p\ell}\le  \f{8c'k_1}{\lam k_0} e^{-\rec{256} c\lam^3 pk_0}.
		\]
		
		We claim that if $Z_k> \lam pk$ for some $k\in [k_0,k_1]$, then $E$ occurs.  Indeed, suppose such a $k$ exists and let $b$ be the smallest integer such that $k\le x_b$, which in particular implies $x_b-k\le\ell$. We also have $b\ge 2$ because $k_0\le k\le b\ell$ and $k_0/\ell= 2\lam^{-1}\ge 2$, and thus 
		\begin{equation}\label{eq:bquart}
			k\ge \half (b-1)\ell \ge \quart b\ell.
		\end{equation}  
		Note that $Z_{x_b} - Z_k \ge - \ell p$ because $Z_k - Z_{k-1} \ge -p$ for all $k$. Using this, $\ell=\half \lam k_0\le \half \lam k$, and inequality \eqref{eq:bquart},  we have 
		\[
		Z_{x_b}> \lam pk-\ell p\ge \half \lam pk\ge \rec{8} pb\ell,
		\]
		so $E$ occurs. Thus,
		\[
		\Pr[\exists k \in [k_0, k_1], Z_k > \lam pk] \le \Pr[E] \le \f{8c'k_1}{\lam k_0} e^{-\rec{256} c\lam^3 pk_0}
		\]
		as desired.
	\end{proof}
	
	Using Lemma~\ref{lem:union}, we can show that subcritical guesses are well behaved.
	
	\begin{lem}\label{lem:subcritical}
		If $\ep\le \rec{8}$ and $n$ is sufficiently large in terms of $\ep,m$, then
		\[
		\Pr\Big[Y_0(H) > (1+4\ep)\frac{b_0(H)}{n}\Big] \le c'\ep^{-2}e^{-c \ep^4 m},
		\]
		for some absolute constants $c,c'>0$.
	\end{lem}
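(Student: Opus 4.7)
The plan is to bound $\Pr[Y_0(H) > (1+4\ep)b_0(H)/n]$ by combining Lemmas~\ref{lem:pointwise}, \ref{lem:weak}, and \ref{lem:union}. I will set $p \coloneqq 1/((1-2\ep)n)$ and introduce the event $E \coloneqq \{Y(H) \le \ep mn\}$; Lemma~\ref{lem:weak} applied with $\lam = \ep$ gives $\Pr[\ol E] \le 2e^{-mn}$. On $E$, $Y(H_{t-1}) \le \ep mn$ for every $t$, so for any subcritical guess (where $a_{g_t}(H_{t-1}) < \ep mn$) Lemma~\ref{lem:pointwise} gives $\Pr[\rand{\pi}_t = g_t \mid H_{t-1}] \le m/(mn - \ep mn - \ep mn) = p$. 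To turn this into an unconditional statement usable inside Lemma~\ref{lem:binomialStochastic}, I will truncate: letting $X_t$ be the indicator that the $t$-th subcritical guess is correct, set $\hat X_t \coloneqq X_t\cdot\1[Y(H_{t-1}) \le \ep mn]$. Then $\Pr[\hat X_t = 1 \mid \hat X_1,\dots,\hat X_{t-1}] \le p$ unconditionally, while $\hat X_t = X_t$ on $E$.

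The crucial combinatorial observation is that $b_0(H) \ge \ep mn$ for every history $H$, regardless of strategy. This follows from the identity $b_0(H) = \sum_i \min(a_i(H),\ep mn)$: if every $a_i < \ep mn$ then $b_0 = \sum_i a_i = mn$, while if some $a_i \ge \ep mn$ then that single term already contributes $\ep mn$. This observation is what lets me restrict the union bound to $b \in [\ep mn, mn]$ and sidestep the $\Theta(1)$ deviation probabilities for very small $b$ that would otherwise be unavoidable (for instance, the chance of landing a single correct guess among the first $\sim n$ subcritical attempts is already $\sim 1-e^{-1}$, far larger than the claimed bound).

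Next, setting $Z_k \coloneqq \sum_{t=1}^{k}(\hat X_t - p)$, I will verify the hypotheses of Lemma~\ref{lem:union} with $c=\half$ and $c'=1$: $Z_k - Z_{k-1} \ge -p$ is immediate, and $\Pr[Z_k - Z_{k'} > \lam p(k-k')] \le e^{-\lam^2 p(k-k')/2}$ follows by stochastic domination via Lemma~\ref{lem:binomialStochastic} together with the Chernoff bound Lemma~\ref{lem:chern} applied to $B(k-k',p)$. Applying Lemma~\ref{lem:union} with $\lam = \ep$, $k_0 = \ep mn$, $k_1 = mn$ then yields $\Pr[\exists b \in [\ep mn, mn],\, Z_b > \ep p b] \le \f{8}{\ep^2}e^{-\ep^4 mnp/512} \le \f{8}{\ep^2}e^{-\ep^4 m/512}$, using $np = 1/(1-2\ep) \ge 1$. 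Since $(1+4\ep)(1-2\ep) \ge 1+\ep$ for $\ep \le 1/8$, the bad event $Y_0 > (1+4\ep)b_0/n$ intersected with $E$ unpacks to $Z_{b_0} > \ep p b_0$ with $b_0 \in [\ep mn, mn]$, which is exactly what the previous bound controls. Adding $\Pr[\ol E] \le 2e^{-mn}$, which is negligible for $n$ large in terms of $\ep$ and $m$, finishes the argument.

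The most delicate point is the truncation $\hat X_t$: the conditional success bound from Lemma~\ref{lem:pointwise} only applies when $Y(H_{t-1}) \le \ep mn$, and without truncating this only holds on the future-dependent event $E$, which would obstruct the direct invocation of Lemma~\ref{lem:binomialStochastic}. The other indispensable ingredient is the lower bound $b_0 \ge \ep mn$; without it one would need to union-bound down to small $b$, where the relevant deviation probabilities are $\Theta(1)$ and the overall strategy collapses.
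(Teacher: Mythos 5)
Your proof is correct and follows essentially the same route as the paper: truncate the subcritical success indicators on the event $Y \le \ep mn$, use Lemma~\ref{lem:pointwise} to bound the conditional success probability by $p = \rec{(1-2\ep)n}$, invoke stochastic domination to reduce to a binomial, and then apply Lemma~\ref{lem:union} over the adaptively chosen number of trials, using the crucial lower bound $b_0(H) \ge \ep mn$. You make the argument slightly more self-contained by explicitly deriving $b_0(H) = \sum_i \min(a_i(H), \ep mn) \ge \ep mn$, a fact the paper merely asserts, and you use the choice $c=\half$, $c'=1$ consistent with Lemma~\ref{lem:chern} (the paper has the labels reversed there).
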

	
	\begin{proof}
		Given $t\le b_0(H)$, let $t'$ be the smallest positive integer for which $b_0(H_{t'}) = t$, so that $t'$ is the time of the $t$-th subcritical guess (note that $t'$ is itself a random variable), and let $X_t \coloneqq Y_0(H_{t'}) - Y_0(H_{t'-1})$. In other words, $X_t$ is the indicator of the $t$-th subcritical guess. Let $E$ be the event that $Y(H) > \ep mn$, and define $E_t$ to be the event that $Y(H_{t'})>\ep mn$. Observe that $\overline{E}$ implies that no $E_t$ occurs. 
		
		Note that $Y_0(H)=\sum_{t=1}^{b_0(H)} X_t$. We modify $Y_0(H)$ to ignore the events $E_t$ as follows. Define $X'_t=X_t$ if $E_{t-1}$ does not occur and $X_t'=0$ otherwise, and let $Y'_0=\sum_{t=1}^{b_0(H)}X'_t$. With $L:=(1+4\ep)\frac{b_0(H)}{n}$, we find
		\begin{align*}
			\Pr[Y_0(H) >L] &\le \Pr[(Y_0 (H) > L) \wedge \overline{E}] + \Pr[E] = \Pr[(Y_0' > L)\wedge \overline{E}] + \Pr[E] \\&\le \Pr[Y_0' > L] + \Pr[E],
		\end{align*}
		
		By Lemma~\ref{lem:weak} we know $\Pr[E]\le 2e^{-mn^{1/2}}$, so for $n$ sufficiently large the contribution of $\Pr[E]$ is negligible.  It remains to upper bound the probability that $Y'_0$ is large.  Note that $X_t' = 1$ if and only if the next term $\rand{\pi}_{t'}$ is exactly the next guess $i$, the total number $a_i(H_{t'-1})$ of times $i$ is guessed is at most $\ep mn$, and the total number $Y(H_{t'-1})$ of correct guesses up to this point is also at most $\ep mn$.
		We now have by Lemma~\ref{lem:pointwise} that
		\[
		\Pr[X_t' = 1|X_1',\ldots,X_{t-1}'] \le \frac{m_i(H_{t'-1})}{mn - a_i(H_{t'-1}) - Y(H_{t'-1})} \le \frac{m}{(1-2 \ep)mn} = \frac{1}{(1-2\ep) n}\eqqcolon p.
		\]
		
		Define $B_1,B_2,\ldots,B_{mn}$ to be independent Bernoulli random variables with $\Pr[B_t=1]=p$ and define $Z_k=\sum_{t=1}^k B_t-pk$.  By the above inequality, we see that given any history $h_{t'-1}$ up to the $t'$-th guess, $X_t'$ is stochastically dominated by $B_t$, and hence $Z_k$ stochastically dominates $\sum_{t=1}^k X'_t-pk$.  Observe that
		\[
		\sum_{t=1}^{b_0(H)}X'_t-pb_0(H)> \f{\ep b_0(H)}{(1-2\ep)n}\iff Y'_0>\f{(1+\ep)b_0(H)}{(1-2\ep)n} \Longleftarrow Y_0'>\f{(1+4\ep)b_0(H)}{n}=L,
		\]
		where the last step used $\ep\le \rec{8}$.  Because $Z_{b_0(H)}$ stochastically dominates the above sum, we have

		\begin{align*}
			\Pr[Y_0'> L]&\le \Pr\l[ \sum_{t=1}^{b_0(H)}X'_t-pb_0(H)> \f{\ep b_0(H)}{(1-2\ep)n}\r]\le \Pr\l[Z_{b_0(H)}>\f{\ep b_0(H)}{(1-2\ep)n}\r]\\ &\le \Pr\l[\exists k\in [\ep mn, mn],\ Z_{k}>\f{\ep k}{(1-2\ep)n}\r],
		\end{align*}
		Where this last step used that the number of subcritical guesses $b_0(H)$ must always be at least $\ep mn$ and at most $mn$.
		
		Because $Z_k$ is a centered binomial distribution, $\Pr[Z_k-Z_{k'}>\lam p(k-k')]\le e^{-\rec{3} \lam^2 p(k-k')}$ for $k'<k$  by Lemma~\ref{lem:chern}, and also $Z_k-Z_{k-1}\ge -p$ for all $k$ by construction.  If $n$ is sufficiently large we have $\ep mn\ge 2\ep^{-1}$, so we can apply Lemma~\ref{lem:union} to the above inequality with $c'=\rec{3}$ and $c=1$ to conclude
		
		\begin{align*}
			\Pr[Y_0'> L]\le \f{8}{(1-2\ep)\ep^2}e^{-\rec{768(1-2\ep)} \ep^4 m}
			\le 16\ep^{-2}e^{-\rec{768} \ep^4 m},
		\end{align*}
		with this last step using $\ep\le \quart$.
	\end{proof}

	\subsection{Concentration of Critical Guesses}\label{sec:critical}
	In the subcritical region we were able to bound the number of correct guesses by a binomial random variable.  For the critical region, we compare the number of correct guesses with a hypergeometric random variable.  We recall that a random variable $S\sim\tr{Hyp}(N,m,b)$ has a hypergeometric distribution (with parameters $N,m,b$) if for all integers $1\le k\le m$ we have \begin{equation}\label{eq:hyp}\Pr[S=k]={b\choose k}{N-b\choose m-k}{N\choose m}^{-1}.\end{equation}
	Equivalently one can define this by uniformly shuffling a deck of $N$ cards with $m$ of these cards being ``good'', and then letting $S$ be the number of good cards one sees in the first $b$ draws from the deck.  From this viewpoint, if we let $R_t$ denote the indicator variable which is 1 if the $t$th draw is a good card, we see that $S=\sum_{t=1}^b R_t$ and that the $R_t$ can be defined by
	\begin{equation}\label{eq:hypR}\Pr[R_t=1]=\frac{m - (R_1 + \cdots + R_{t-1})}{N-t+1}.\end{equation}
	
	We can use the following lemma to bound random variables by hypergeometric random variables.
	
	\begin{lem}\label{lem:domination}
		Suppose $P_1,\ldots, P_k$ and $R_1,\ldots, R_k$ are $\{0,1\}$-random variables satisfying
		\begin{align*}
			\Pr[P_t=1] & \le \frac{m - (P_1 + \cdots + P_{t-1})}{N-t+1} \\
			\Pr[R_t=1] & = \frac{m - (R_1 + \cdots + R_{t-1})}{N-t+1}.
		\end{align*}
		Then $R_1 + \cdots + R_k \succeq P_1 + \cdots + P_k$.
	\end{lem}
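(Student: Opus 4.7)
The plan is to induct on $k$, applying Lemma~\ref{lem:dom-induction} at each step. Let $S_k^R = R_1 + \cdots + R_k$ and $S_k^P = P_1 + \cdots + P_k$. The base case $k=0$ is trivial since both sums are identically $0$. For the inductive step, assume $S_{k-1}^R \succeq S_{k-1}^P$; we will apply Lemma~\ref{lem:dom-induction} with $X = S_{k-1}^R$, $Y = S_{k-1}^P$, $X' = R_k$, and $Y' = P_k$.

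The first hypothesis of Lemma~\ref{lem:dom-induction} ($X \succeq Y$) is exactly the inductive hypothesis. The second hypothesis requires that for every integer $x$, $(R_k \mid S_{k-1}^R = x) \succeq (P_k \mid S_{k-1}^P = x)$. Since both are $\{0,1\}$-valued, this just means
\[
\Pr[R_k = 1 \mid S_{k-1}^R = x] \ge \Pr[P_k = 1 \mid S_{k-1}^P = x].
\]
The given equality $\Pr[R_k = 1] = \frac{m - S_{k-1}^R}{N-k+1}$ should be read as a statement about the conditional law of $R_k$ given $R_1, \ldots, R_{k-1}$; averaging over all histories consistent with $S_{k-1}^R = x$ gives $\Pr[R_k = 1 \mid S_{k-1}^R = x] = \frac{m-x}{N-k+1}$. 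Similarly, the pointwise inequality for $P_k$ passes through averaging to yield $\Pr[P_k = 1 \mid S_{k-1}^P = x] \le \frac{m-x}{N-k+1}$. Combining these gives the desired inequality, and Lemma~\ref{lem:dom-induction} then yields $S_k^R \succeq S_k^P$, completing the induction.

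The only place requiring a modicum of care is the interpretation of the hypotheses as conditional-probability bounds and the verification that these bounds remain valid after coarsening the conditioning from the full history to just the partial sum. This is a straightforward application of the law of total probability, so there is no substantial obstacle — the entire argument is essentially a direct assembly of Lemma~\ref{lem:dom-induction} with the specific bounds provided.
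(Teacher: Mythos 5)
Your proposal is correct and follows essentially the same approach as the paper: the paper gives the proof in one line, stating that Lemma~\ref{lem:domination} ``follows from induction and applying Lemma~\ref{lem:dom-induction} with $X=R_1 + \cdots + R_{t-1}$, $X' = R_t$, $Y = P_1 + \cdots + P_{t-1}$, and $Y' = P_t$,'' which is exactly your choice of variables. Your additional remarks on reading the hypotheses as conditional-probability statements and coarsening the conditioning to the partial sum are a sound and useful fleshing out of that one-line argument.
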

	
	The proof of Lemma~\ref{lem:domination} follows from induction and applying Lemma~\ref{lem:dom-induction} with $X=R_1 + \cdots + R_{t-1}$, $X' = R_t$, $Y = P_1 + \cdots + P_{t-1}$, and $Y' = P_t$.  The last thing we need is to use Lemma~\ref{lem:union} in this hypergeometric setting.  
	\begin{lem}\label{lem:hypCon}
		Let $N\ge m^2+m$, define the indicator random variables $R_1,R_2,\ldots,R_N$ as in \eqref{eq:hypR}, and let $S_b:=\sum_{t=1}^b R_t$ for all $b$.  Then for all $b$ and $0<\lam\le 1$, \[\Pr\l[S_b> \f{(1+\lam)bm}{N}\r]\le 3e^{-\f{\lam^2 bm}{3N}}.\]
		Further, for all $0<\lam\le 1$ and integers $b_0,b_1$ satisfying $2\lam^{-1}\le b_0\le b_1\le N$, we have \[\Pr\l[\exists b\in [b_0,b_1],S_b> \f{(1+\lam)bm}{N}\r]\le \f{24b_1}{\lam b_0}e^{-\f{\lam^3 b_0 m}{768N}}.\]
	\end{lem}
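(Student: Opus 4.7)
The plan is to first establish the marginal Chernoff bound on $S_b$ and then lift it to the uniform-in-$b$ statement via Lemma~\ref{lem:union}. View $S_b \sim \text{Hyp}(N, m, b)$ as arising from drawing $b$ of the $N$ cards when $m$ are marked good; in particular $\E[S_b] = bm/N$.

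For the first bound, the classical route is Hoeffding's 1963 comparison: for every convex $f$, $\E[f(S_b)] \le \E[f(B(b, m/N))]$. Specializing to $f(x) = e^{sx}$ and running the Markov step of Lemma~\ref{lem:chern} produces $\Pr[S_b > (1+\lambda) bm/N] \le e^{-\lambda^2 bm/(2N)}$ in the standard Chernoff regime. The leading factor of $3$ absorbs edge corrections: when $(1+\lambda) bm/N \ge m$ the probability is identically zero, and the hypothesis $N \ge m^2+m$ eliminates small-deck anomalies that could otherwise hurt the constant.

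For the second bound, set $Z_k := S_k - mk/N$ and $p := m/N$, aiming to apply Lemma~\ref{lem:union}. The one-step bound $Z_k - Z_{k-1} = R_k - p \ge -p$ is automatic. For the pairwise Chernoff condition
\[
\Pr[Z_k - Z_{k'} > \lambda p (k-k')] \le c' \exp(-c \lambda^2 p(k-k')), \qquad 0 \le k' < k,
\]
the key observation is exchangeability of positions in a uniform random shuffle: the joint distribution of $(R_{k'+1}, \ldots, R_k)$ equals that of $(R_1, \ldots, R_{k-k'})$, so $S_k - S_{k'}$ has exactly the same marginal distribution as $S_{k-k'} \sim \text{Hyp}(N, m, k-k')$. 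Applying the first bound with $b$ replaced by $k-k'$ then gives the increment inequality with $c' = 3$ and $c = 1/2$. Substituting these constants into Lemma~\ref{lem:union} with $k_0 = b_0$, $k_1 = b_1$ directly yields the claimed bound $\frac{24 b_1}{\lambda b_0} \exp(-\lambda^3 m b_0/(512 N))$.

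The main obstacle lies in the first step: establishing a clean hypergeometric Chernoff bound with the specific constant $3$ requires some care in the interface between small and large $\lambda$, and is where the hypothesis $N \ge m^2 + m$ enters. The second part is then essentially a black-box application of Lemma~\ref{lem:union}, with the exchangeability observation reducing the pairwise increment condition to the marginal bound we have already proved.
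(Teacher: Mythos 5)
Your proposal is essentially correct, with the second half (the uniform-in-$b$ bound) proceeding exactly as the paper does: observe that $S_k - S_{k'} \sim \mathrm{Hyp}(N,m,k-k')$ by exchangeability, plug the marginal tail bound into Lemma~\ref{lem:union} with $p=m/N$, $c'=3$, $c=1/2$, and read off the result.

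For the first half you take a genuinely different route. The paper does not invoke Hoeffding's 1963 convex-ordering theorem. Instead it bounds the hypergeometric PMF pointwise: writing $q = b/N$ and expanding the binomial coefficients in \eqref{eq:hyp}, one gets
\[
\Pr[S_b=k] \le \binom{m}{k} q^k (1-q)^{m-k}\Bigl(1+\tfrac{m}{N-m}\Bigr)^m \le 3\binom{m}{k} q^k (1-q)^{m-k},
\]
where the last step uses $N\ge m^2+m$ to give $(1+\tfrac{m}{N-m})^m \le e^{m^2/(N-m)} \le e < 3$. Summing over $k$ then compares $S_b$ to a $B(m,\,b/N)$ binomial (note: the paper swaps the roles of $m$ and $b$ relative to your comparison), picking up the factor of $3$. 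Your Hoeffding route compares to $B(b,\,m/N)$, is cleaner, gives constant $1$, and does not need the $N\ge m^2+m$ hypothesis at all — so your closing remark that the factor of $3$ ``absorbs edge corrections'' and that $N\ge m^2+m$ ``eliminates small-deck anomalies'' is a misdiagnosis. In your framework neither is needed; they are artifacts of the paper's elementary PMF comparison. The trade-off is that your proof leans on a nontrivial external theorem, while the paper's is a self-contained three-line computation. Both are valid.
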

	\begin{proof}
		Observe that $S_b\sim \tr{Hyp}(N,m,b)$.  Thus if $q:=b/N$, we have by \eqref{eq:hyp} that \begin{align*}\Pr[S_b=k]&={qN\choose k}{(1-q)N\choose m-k}{N\choose m}^{-1}\le \f{(qN)^k}{k!}\f{((1-q)N)^{m-k}}{(m-k)!}\f{m!}{(N-m)^m}\\ &={m\choose k}q^k(1-q)^{m-k}\l(1+\f{m}{N-m}\r)^m\le {m\choose k}q^k(1-q)^{m-k} e^{m^2/(N-m)}\\&\le {m\choose k}q^k(1-q)^{m-k}\cdot 3,
		\end{align*}
		where this last step used $N-m\ge m^2$.
		
		We thus see that $\Pr[S_b> (1+\lam)qm]\cdot 3^{-1}$ is at most the probability that a binomial distribution with $m$ trials and probability $q$ of success has at least $(1+\lam)qm$ successes, which is at most $e^{-\lam^2 qm/3}$ by Lemma~\ref{lem:chern}.  This gives the first result.
		
		For the second result, define $p=m/N$ and let $Z_b:=S_b-pb$.  Note that $S_b-S_{b'}\sim \tr{Hyp}(N,m,b-b')$ for $b>b'$ (since this is just a sum of $b-b'$ of the $R_t$ variables), so the first result implies \[\Pr[Z_{b}-Z_{b'}> \lam p(b-b')]=\Pr[S_{b}-S_{b'}> (1+\lam)p(b-b')]\le  3e^{-\rec{3} \lam^2 pb}.\]
		We can thus apply Lemma~\ref{lem:union} to the $Z_b$ variables with $c'=3$ and $c=-\rec{3}$ to conclude the result.
	\end{proof}

	Using this we can prove the following.
	
	\begin{lem}\label{lem:critical}
		For $i\ge 1$ finite, $\ep\le \quart$, and $n$ sufficiently large in terms of $\ep,m$, we have 
		\[
		\Pr\Big[Y_i(H) > (1+4\ep)\frac{b_i(H)}{n} + \ep^2 m\Big] \le c' \ep^{-2} e^{-c \ep^4 m}
		\]
		for some absolute constants $c,c'>0$.
	\end{lem}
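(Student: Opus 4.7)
The plan is to mirror the proof of Lemma~\ref{lem:subcritical}, replacing the binomial stochastic domination and tail bound (Lemmas~\ref{lem:binomialStochastic} and~\ref{lem:union}) by the hypergeometric analogues (Lemmas~\ref{lem:domination} and~\ref{lem:hypCon}). First I would fix $i \in [k]$ and let $P_j$ be the indicator that the $j$-th critical guess of card $i$ (made at time $t_j$) is correct, so $Y_i(H) = \sum_{j=1}^{b_i(H)} P_j$. Letting $E = \{Y(H) > \ep mn\}$, Lemma~\ref{lem:weak} gives $\Pr[E] \le 2e^{-mn}$. On $\ol{E}$, the definition of the critical regime gives $a_i(H_{t_j-1}) \ge \ep mn + (j-1)$ and $Y(H_{t_j-1}) \le \ep mn$, while $m_i(H_{t_j-1}) \le m - (P_1 + \cdots + P_{j-1})$ since the $j-1$ earlier correct critical guesses of $i$ have already been removed from the deck. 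Lemma~\ref{lem:pointwise} therefore yields
\[
\Pr[P_j = 1 \mid P_1, \ldots, P_{j-1}] \le \frac{m - (P_1 + \cdots + P_{j-1})}{(1-2\ep)mn - (j-1)}.
\]
Setting $P_j' = P_j \cdot \1[Y(H_{t_j-1}) \le \ep mn]$ makes this inequality unconditional, and Lemma~\ref{lem:domination} with $N := (1-2\ep)mn$ then shows that $Y_i'(H) := \sum_{j=1}^{b_i(H)} P_j'$ is stochastically dominated by the hypergeometric partial sum $S_{b_i(H)}$ drawing from an urn of $N$ items containing $m$ good ones. Since $Y_i(H) = Y_i'(H)$ on $\ol E$, the task reduces, modulo the negligible $\Pr[E]$, to bounding $\Pr[\exists b \in [0, N] : S_b > (1+4\ep) b/n + \ep^2 m]$ uniformly over $b$, since $b_i(H)$ may be chosen adversarially by the guesser.

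For the main range $b \in [\lceil \ep mn \rceil, N]$ I would apply Lemma~\ref{lem:hypCon} with $\lambda$ slightly below $2\ep$ and $b_0 = \lceil \ep mn \rceil$; a short calculation shows $(1+\lambda)bm/N \le (1+4\ep) b/n + \ep^2 m$ uniformly over this range once $\ep$ is small, so Lemma~\ref{lem:hypCon} bounds the desired probability by $c_1 \ep^{-2} e^{-c_2 \ep^4 m}$. For the smaller range $b < \ep mn$ the additive $\ep^2 m$ slack in the target becomes essential. Using monotonicity of $S_b$, I would partition $[0, \ep mn)$ into $O(\log \ep^{-1})$ dyadic subintervals $[2^{-k-1}\ep mn,\, 2^{-k}\ep mn]$ and apply Lemma~\ref{lem:hypCon} on each with progressively larger $\lambda_k \sim 2^{k/3}\ep$, chosen so that the exponent $\lambda_k^3 b_0 m/N$ remains of order $\ep^4 m$, while the multiplicative looseness $(1+\lambda_k)$ at the upper endpoint of each subinterval is absorbed by the $\ep^2 m$ buffer. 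Below the smallest dyadic endpoint (at scale $\ep^2 mn$), the single-point form of Lemma~\ref{lem:hypCon} with $\lambda = 1$ gives $S_b \le 2\ep^2 m$ except on an event of probability $O(e^{-\ep^2 m})$, and monotonicity extends this to all smaller $b$.

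The main obstacle is this last step: unlike in Lemma~\ref{lem:subcritical}, where $b_0(H) \ge \ep mn$ is automatic, here $b_i(H)$ can be arbitrarily small, so the hypergeometric concentration must hold \emph{uniformly} over all $b \in [0, N]$. The $\ep^2 m$ additive slack is precisely what makes this possible: for small $b$ the target $(1+4\ep) b/n + \ep^2 m$ is essentially the constant $\ep^2 m$, giving enough room for the coarser bounds from Lemma~\ref{lem:hypCon} with larger $\lambda$ to apply. Summing the $O(\log \ep^{-1})$ contributions from the dyadic subintervals together with the main-range bound and $\Pr[E] \le 2e^{-mn}$ yields the stated bound, with the $\log \ep^{-1}$ factor absorbed into the $\ep$-dependent constants $c, c'$.
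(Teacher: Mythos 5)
Your proposal follows the same structure as the paper's proof: condition on $\overline{E}=\{Y(H)\le \ep mn\}$ to make the pointwise bound unconditional, use Lemma~\ref{lem:domination} to stochastically dominate the (modified) critical correct-guess count by a hypergeometric partial sum with $N=(1-2\ep)mn$, and then apply Lemma~\ref{lem:hypCon} over a dyadic partition of the possible values of $b_i(H)$, union-bounding the $O(\log\ep^{-1})$ pieces. The paper parametrizes the dyadic intervals going upward from $b_0=\tfrac12\ep^2(1-2\ep)mn$ with $\lam_j=2^{-j}$, whereas you go downward from $\ep mn$ with $\lam_k\sim 2^{k/3}\ep$; both choices keep the exponent $\lam^3 b_0 m/N$ of order $\ep^4 m$ uniformly across scales, so this is only a reparametrization. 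The one place you should recheck numbers is the bottom scale: taking the smallest dyadic endpoint at exactly $\ep^2mn$ and $\lam=1$ in the single-point bound gives $(1+\lam)bm/N = 2\ep^2m/(1-2\ep)>\ep^2m$, which overshoots the target $(1+4\ep)b/n+\ep^2m\approx\ep^2m$ for tiny $b$. The fix is routine and is exactly what the paper does: take the bottom endpoint to be $\tfrac12\ep^2(1-2\ep)mn$ so that $2b_0m/N=\ep^2m$ on the nose, and the single-point bound then delivers $\Pr[\exists b\le b_0:\ S_b>\ep^2m]\le 3e^{-\ep^2m/4}$ as required.
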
 
	\begin{proof}
		Fix $i$ positive and finite, and let $X_t \coloneqq Y_i(H_{t'}) - Y_i(H_{t'-1})$ where $t'$ is the smallest positive integer for which $b_i(H_{t'}) = t$ (note that $t'$ is itself a random variable). In other words, $X_t$ is the indicator of the $t$-th critical guess of $i$. Define $X_t '=X_t$ if $Y(H) \le \ep mn$ and define $X_t'=0$ otherwise.
		
		Let $R_t$ be random variables as in Lemma~\ref{lem:domination} with $N=(1-2\ep)mn$, and define $S_b=\sum_{i=1}^b R_i$ for all $1\le b\le (1-2\ep)mn$. By applying Lemma~\ref{lem:main} (and noting that $i$ was guessed $\ep mn$ times before its critical guesses started), we see
		\[
		\Pr[X_t'=1] \le \frac{m - (X_1' + \cdots + X_{t-1}')}{(1-2\ep)mn - t + 1}.
		\]
		Thus we can apply Lemma~\ref{lem:domination} with $X'_t$ taking the role of $P_t$, and letting
		\[
		L(b)=(1+4\ep) \frac{b}{n} + \ep^2 m
		\]
		gives
		\begin{align*}
			\Pr[Y_i(H) > L(b_i(H))]&= \Pr\l[ \sum_{t=1}^{b_i(H)} X_t >  L(b_i(H))\r] \\
			& \le \Pr[Y(H) \le \ep mn]\cdot \Pr\l[\sum_{t=1}^{b_i(H)} X_t' > L(b_i(H))\r] + \Pr[Y(H) > \ep mn] \\
			& \le  1\cdot \Pr[S_{b_i(H)} > L(b_i(H))] + 2e^{-mn^{1/2}}\\ 
			& \le  \Pr\l[\exists b\in [1,(1-2\ep)mn],\ S_b >L(b)\r] + 2e^{-mn^{1/2}},
		\end{align*}
		where the second to last step used Lemma~\ref{lem:weak} and the last step used that the value of $b_i(H)$ must lie in $1$ and $(1-2\ep)mn$.  
		
		To bound $\Pr\l[\exists b\in [1,(1-2\ep)mn],\ S_b >L(b)\r]$, we partition $[1,(1-2\ep)mn]$ into intervals $[b_{j-1},b_j]$ (which we define below) and show that  $\Pr\l[\exists b\in [b_{j-1},b_{j}],\ S_b >L_j(b)\r]$ is small for all $j$, where $L_j(b)$ is some quantity upper bounded by $L(j)$.  Taking a union bound will then give the desired result.
		
		Let $b_0:=\half \ep^2(1-2\ep)mn\ge 2$ for $n$ sufficiently large.  By taking $\lam=1$ in Lemma~\ref{lem:hypCon}, we have
		\begin{equation}\label{eq:small}\Pr[\exists b\in [1,b_0],\ S_b > \ep^2 m]=\Pr\l[S_{b_0}>\f{2b_0m}{(1-2\ep)mn}\r]\le 3 e^{-\f{b_0 m}{(1-2\ep)mn}}=3 e^{-\quart \ep^2 m}.\end{equation}
		
		Define $b_j=2^j b_{0}$.  Observe that for all $b\le b_j$ we have $\ep^2\ge 2^{1-j}\f{b}{(1-2\ep)mn}$.  Thus for $j$ such that $2^{1-j}\ge 4 \ep$ we find for $n$ sufficiently large in terms of $j$,
		\begin{align}
			Pr[\exists b\in [b_{j-1},b_j],\ S_b > \f{b}{n}+\ep^2m]&\le \Pr\l[\exists b\in [b_{j-1},b_j],\nonumber\ S_b > (1-2\ep+2^{1-j})\f{bm}{(1-2\ep)mn}\r]\\ &\le \Pr\l[\exists b\in [b_{j-1},b_j],\ S_b > (1+2^{-j})\f{bm}{(1-2\ep)mn}\r]\nonumber\\ &\le 48\cdot 2^je^{-\f{2^j b_0}{2^{3j+9}(1-2\ep)n}}=48\cdot 2^j e^{-2^{-2j-10}\ep^2m}\nonumber\\ &\le 48 \ep^{-1}e^{-2^{-10} \ep^4 m}\label{eq:medium},\end{align}
		where this third inequality used Lemma~\ref{lem:hypCon}.
		
		Let $J=\floor{\log_2(\ep^{-1})}-1$, noting that we can apply the above bound up to \[b_J=2^{J-1}\ep^2(1-2\ep)mn\ge \rec{8} \ep (1-2\ep)mn.\] Observe that for $\ep\le \rec{8}$, \[\f{(1+4\ep)b}{n}\ge  \f{(1+\ep)b}{(1-2\ep)n}.\] Thus by Lemma~\ref{lem:hypCon} applied with $\lam=\ep$, we see that {\small \begin{align*}\Pr\l[\exists b\in [b_J,(1-2\ep)mn],\ S_b > \frac{(1+4\ep)b}{n}\r]&\le \f{24(1-2\ep)mn}{\ep b_J}e^{-\f{\ep^3 b_J}{768(1-2\ep)n}}\le 96\ep^{-2}e^{\f{-1+\ep}{3072(1-2\ep)}\ep^4 m}.\end{align*}}
		Taking the union bound over this, \eqref{eq:small}, and \eqref{eq:medium} for the at most $-\log_2(\ep)\le \ep^{-1}$ values of $j\le J$ gives the result.
	\end{proof}
	\subsection{Completing the Proof}\label{sec:end-proof}
	
	We need the following simple consequence of Lemma~\ref{lem:techWeak}.
	
	\begin{lem}\label{lem:conditional-tail}
		If $n$ is sufficiently large and $A$ is an event with $\Pr[A] = p>0$, then
		\[
		\E[Y(H) | A] < 200m+20 p^{-1}.
		\]
	\end{lem}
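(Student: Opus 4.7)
The plan is to apply the tail-sum formula
\[ \E[Y(H) \mid A] = \sum_{t\ge 1}\Pr[Y(H)\ge t\mid A] \le \sum_{t\ge 1}\min\bigl(1,\Pr[Y(H)\ge t]/p\bigr), \]
combined with the tail bound supplied by Lemma~\ref{lem:weak}.  For each $t$ one has the trivial bound $\Pr[Y(H)\ge t\mid A]\le 1$ and the ratio bound $\Pr[Y(H)\ge t\mid A]\le \Pr[Y(H)\ge t]/p$; the key move is to split the sum at a threshold where these two bounds balance.

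Although Lemma~\ref{lem:weak} is stated with the uniform right-hand side $2e^{-mn}$, the proof actually yields the stronger sub-Gaussian estimate $\Pr[Y(H)\ge t]\le 2e^{-t^2/(1152m)}$ for $t$ in the range $[\Omega(m),\,mn/6]$, with the stated form corresponding to the extreme $\lambda = \sqrt{1152/n}$.  I will set $T = \sqrt{1152m\log(2/p)}$, so that the sub-Gaussian tail at $T$ roughly equals $p$.  The ``$t \le T$'' portion of the sum then contributes at most $T$, while the ``$t > T$'' portion is a Gaussian tail integral of size $O(\sqrt{m/\log(1/p)})$, plus an exponentially small remainder from $t > mn/6$ that is handled directly by the stated form of Lemma~\ref{lem:weak}.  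Altogether this yields $\E[Y(H)\mid A] \le \sqrt{1152m\log(2/p)} + O(1)$.

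It remains to verify that $\sqrt{1152m\log(2/p)} + O(1) \le -1000\log p + 1$.  Since $1152m\log(2/p) \le 10^6(\log(1/p))^2$ precisely when $\log(1/p)$ exceeds a small multiple of $m$, the inequality is automatic in the intended regime, where $A$ will be a rare event whose probability decays exponentially in $m$.  The main obstacle is handling the boundary cases where $T$ falls outside the sub-Gaussian range of applicability: these are controlled by combining the stated $\Pr[Y(H)>mn/6]\le 2e^{-mn}$ with the trivial bound $\Pr[Y(H)\ge t]\le 1$ for small $t$, and by using that ``$n$ sufficiently large'' absorbs all residual error into the additive $+1$.  The generous constant $1000$ (versus the sub-Gaussian constant $\sqrt{1152}\approx 34$) is what makes the estimate uniform.
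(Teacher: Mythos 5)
Your route is close in spirit to the paper's --- both split a tail sum at a threshold chosen to balance the tail bound against $p$ --- but you work directly with the sub-Gaussian estimate $\Pr[Y(H)\ge t]\le 2e^{-t^2/(1152m)}$ extracted from the proof of Lemma~\ref{lem:weak}, whereas the paper reparametrizes to the cruder exponential bound $\Pr[Y(H)>1000mx]\le e^{-mx}$ for $x\ge 1$; that reparametrization is precisely what produces the threshold $-1000\log p$ in place of your $\sqrt{1152m\log(2/p)}$.

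There is, however, a real gap in your final verification. You assert that $\sqrt{1152m\log(2/p)}+O(1)\le -1000\log p+1$ ``in the intended regime, where $A$ will be a rare event whose probability decays exponentially in $m$.'' That is not the regime in which the lemma is actually applied: in Section~\ref{sec:end-proof} it is used with $A=E_0\vee E_1$, for which $\Pr[A]\le c'\ep^{-3}e^{-c\ep^4m}$ and $\ep=O((\log m/m)^{1/4})$, so $\ep^4 m=\Theta(\log m)$ and hence $\log(1/p)=\Theta(\log m)$ --- only polynomially small in $m$. In that range your quantity $\sqrt{1152m\log(2/p)}=\Theta(\sqrt{m\log m})$ vastly exceeds $-1000\log p=\Theta(\log m)$, so the inequality you need fails. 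A second, compounding issue: the sub-Gaussian estimate you extract holds only for $t\gtrsim m$ (the proof of Lemma~\ref{lem:weak} needs $\lam n$ bounded below by an absolute constant, i.e.\ $t=\lam mn$ bounded below by a constant times $m$), and whenever the crossover $T=\sqrt{1152m\log(2/p)}$ falls below that cutoff, the small-$t$ block contributes $\Omega(m)$ rather than $O(1)$ via the trivial bound; ``$n$ sufficiently large'' does not suppress this, since the cutoff scales with $m$, not $n$. So what your argument actually delivers is $\E[Y(H)\mid A]\le \max\bigl(\Omega(m),\sqrt{1152m\log(2/p)}\bigr)+O(1)$, which is the right shape of estimate but does not give the stated $-1000\log p+1$ for the values of $p$ that occur.
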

	\begin{proof}
		The statement of Lemma~\ref{lem:techWeak} implies the following: For any $0 < \lam \le 1/6$ and $n\ge 200\lam^{-1}$,
		\[\Pr[Y(H)> \lam mn]\le 2e^{-\lam mn/12}.\]
		By taking $x=\lam n$, this is equivalent to saying that for $200\le x\le n/6$ we have
		\[\Pr[Y(H)>xm]\le 2e^{-x m/12}.\]
		In particular, even after conditioning on the event $A$,
		\[
		\Pr[Y(H) > xm | A] \le 2p^{-1}e^{-xm/12}.
		\]
		With this we have
		\begin{align*}
			\E[Y(H) | A] &=m\int_0^n\Pr[Y(H) > xm | A] dx \\&\le 200 m + m\int_{200}^{n/6} \Pr[Y(H) > xm | A] dx+mn\cdot \Pr[Y(H)>mn/6] \\
			& \le 200m + m\int_{0}^{\infty}2p^{-1}e^{-xm/12}dx+2mne^{-mn^{1/2}} \\&= 200m+12p^{-1}+2mne^{-mn^{1/2}},
		\end{align*}
		where the second inequality used the observation made above and Lemma~\ref{lem:weak}.  This gives the result by taking $n$ to be sufficiently large in terms of $m$.
	\end{proof}
	
	Finally we have all the tools to prove the main theorem.
	
	\begin{proof}[Proof of Theorem~\ref{thm:main}]
		We will pick $\ep = O((\log m/m)^{1/4})$, and show that for an appropriate such $\ep$ and $n$ sufficiently large in terms of $m$ and $\ep$, $\E[Y(H)] \le (1+\ep) m$.
		To this end, we define the following three ``atypical'' events: $E_0, E_1$ and $E_\infty$.
		\begin{itemize}
			\item The event $E_0$ is the event that $Y_0(H) > (1+4\ep) b_0(H) / n$, in other words that significantly more than the average number of subcritical guesses are correct.
			\item The event $E_1$ is the event that $Y_i(H) > (1+4\ep) b_i(H) /n + \ep^2 m$ for some $i\ge 1$, in other words that for some critical card $i$, significantly more than the average number of critical guesses of card $i$ are correct.
			\item The event $E_\infty$ is the event that there is at least one supercritical card. In this case, this single card is guessed at least $(1-\ep)mn$ times.
		\end{itemize}
		
		Our goal will be to calculate the conditional expectation of $Y(H)$ depending on whether or not the exceptional events above occur. It will be convenient to group $E_0$ and $E_1$ together and define their union $A = E_0 \vee E_1$. Then,
		{\small \begin{equation}\label{eq:conditional-expectation}
				\E[Y(H)] = \Pr[A]\E[Y(H) | A] + \Pr[\overline{A} \wedge \overline{E}_\infty] \cdot \E[Y(H) | \overline{A} \wedge \overline{E}_\infty] + \Pr[\overline{A} \wedge E_\infty] \E[Y(H) |\overline{A} \wedge E_\infty].
		\end{equation}}
		We first observe that if none of the events $E_0, E_1$ and $E_\infty$ occur, then the conditional expectation of $Y(H)$ is small. Indeed, we have
		\begin{equation}\label{eq:typical}
			\E[Y(H) | \overline{A} \wedge \overline{E}_\infty]=\E[Y(H) | \overline{E}_0 \wedge \overline{E}_1 \wedge \overline{E}_\infty] \le (1+5\ep)m,
		\end{equation}
		since all guesses must be subcritical or critical, and there are at most $\ep^{-1}$ distinct critical card types $i$.
		
		Define $p_j = \Pr[E_j]$ for $j\in \{0, 1\}$. We have by Lemma~\ref{lem:subcritical} and Lemma~\ref{lem:critical} that for some absolute constants $c, c'>0$,
		\[
		p_0 \le c'\ep^{-2}e^{-c \ep^4 m}
		\]
		and
		\[
		p_1 \le c'\ep^{-3} e^{-c \ep^4 m},
		\]
		where there is an extra multiplicative factor of $\ep ^{-1}$ in the second inequality because there may be up to $\ep^{-1}$ critical cards.
		In particular, we have 
		\begin{equation}\label{eq:p-bound}
			\Pr[A]=\Pr[E_0 \vee E_1] \le p\coloneqq 2c'\ep^{-3} e^{-c \ep^4 m}.
		\end{equation}
		By Lemma~\ref{lem:conditional-tail} we find
		\[
		\E[Y(H) | A] \le 200m+20\Pr[A]^{-1},
		\]
		and so
		\[
		\Pr[A]\E[Y(H) | A] \le 200m \Pr[A] + 20 \le 200m p +20,
		\]
		for $p$ defined above.  By picking an appropriate $\ep = O((\log m/m)^{1/4})$, we find
		\begin{equation}\label{eq:firstTerm}
			\Pr[A]\E[Y(H) | A] \le m^{-\Omega(1)} + 20 < \ep m
		\end{equation}
		for $m$ sufficiently large.
		
		Finally, to control the third term of (\ref{eq:conditional-expectation}), note that if there is a supercritical card, at most $m$ guesses are correct for that card (since there are a total of $m$ copies of that card in the deck), and at most $\ep mn$ guesses are made of any other card, so all other guesses are subcritical. In particular, including guesses of the unique supercritical card, there at most $b_0(H)\le 2\ep mn$ subcritical guesses. Thus, by the definition of $E_0$, we get
		\[
		\E[Y(H) |  \overline{A}\wedge E_\infty ] \le m + (1+4\ep) (2\ep m) \le (1+3\ep)m.
		\]
		
		In total, using (\ref{eq:conditional-expectation}), \eqref{eq:typical}, \eqref{eq:firstTerm}, and the inequality above, we find that for $m, n$ sufficiently large,
		\[
		\E[Y(H)] \le \ep m + \Pr[\overline{A}] \cdot (1 + 5\ep) m \le (1+6 \ep) m= m+O(m^{3/4} \log^{1/4} m),
		\]
		completing the proof.
	\end{proof}

	\section{Concluding Remarks}\label{sec:concluding}
	In this paper we proved results for two different feedback models.  The first were bounds on $\c{C}_{m,n}^\pm$, which is the most/least number of cards one can guess in the complete feedback model.  For fixed $m$ we determined $\c{C}_{m,n}^+$ asymptotically, and for $\c{C}_{m,n}^-$ we gave the correct order of magnitude.
	\begin{quest}
		What is $\c{C}_{m,n}^-$ asymptotically?
	\end{quest}
	We note that a more careful analysis of the proof of Theorem~\ref{thm:complete} gives that this value is asymptotic to $n^{-1}\E[T_m]$ with $T_m$ as in Lemma~\ref{lem:compProb}, so it suffices to compute this expectation.
	
	Our main result was proving a tight asymptotic upper bound on $\c{P}_{m,n}^+$, which is the most number of cards one can guess correctly in expectation in the partial feedback model.  Specifically, we proved $\c{P}_{m,n}^+=m+O(m^{3/4}\log m)$ provided $n$ is sufficiently large.  One consequence of this bound is the following.  For $\pi \in \S_{m,n}$, let $L(\pi)$ denote the largest integer $p$ such that there exist $i_1<\cdots< i_p$ with $\pi_{i_j}=j$ for all $1\le j\le p$.  Define $\c{L}_{m,n}=\E[L(\rand{\pi})]$ where $\rand{\pi}\sim \S_{m,n}$.
	
	\begin{cor}\label{cor:L}
		If $n$ is sufficiently large in terms of $m$, we have
		\[\c{L}_{m,n}\le  m+O(m^{3/4}\log m).\]
	\end{cor}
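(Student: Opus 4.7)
The plan is to exhibit a concrete guessing strategy $\c{G}$ for the partial feedback model whose number of correct guesses equals $L(\pi)$ pointwise, and then invoke Theorem~\ref{thm:main}.

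The strategy $\c{G}$ I would use is the obvious one: maintain a counter $j$, initialized to $j=1$; at each step, guess the value $j$; whenever the feedback reports a correct guess, increment $j$ by one and continue. If $j$ ever exceeds $n$, the remaining guesses can be arbitrary, since the game is already over in the sense relevant here.

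The key observation is that $P(\c{G},\pi)=L(\pi)$ for every $\pi\in\S_{m,n}$. Indeed, the first correct guess made under $\c{G}$ occurs at the earliest position $i_1$ with $\pi_{i_1}=1$; after that success, the counter becomes $2$ and the next correct guess occurs at the earliest position $i_2>i_1$ with $\pi_{i_2}=2$; inductively, the $j$-th correct guess occurs at the earliest position $i_j>i_{j-1}$ with $\pi_{i_j}=j$. The strategy halts accruing correct guesses precisely when no such $i_{j+1}$ exists, which happens exactly at $j=L(\pi)$ by definition of $L$. Hence the number of correct guesses is $L(\pi)$ for every realization.

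Taking expectations and applying Theorem~\ref{thm:main} then gives
\[
\c{L}_{m,n}=\E[L(\rand{\pi})]=\E[P(\c{G},\rand{\pi})]\le \c{P}_{m,n}^+\le m+O(m^{3/4}\log^{1/4} m),
\]
which is stronger than the claimed bound. There is no real obstacle here: the entire content of the corollary is packaged into the trivial verification that the greedy ``guess $1$, then $2$, then $\dots$'' strategy in the partial feedback model realizes the longest-initial-subsequence statistic pointwise, after which Theorem~\ref{thm:main} does all the work.
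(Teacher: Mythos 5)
Your proof takes exactly the same route as the paper: exhibit the greedy ``guess $1$, then $2$, \dots'' strategy $\c{G}$ in the partial feedback model, compare $P(\c{G},\pi)$ to $L(\pi)$, and invoke Theorem~\ref{thm:main} (and you correctly observe that this in fact gives the sharper $m+O(m^{3/4}\log^{1/4}m)$). One small inaccuracy: the claimed pointwise \emph{equality} $P(\c{G},\pi)=L(\pi)$ is really only the inequality $P(\c{G},\pi)\ge L(\pi)$, since once the counter passes $n$ the strategy plays arbitrarily and those arbitrary guesses can still be correct (and sometimes are forced to be, e.g.\ when a single card type remains). This does not affect the argument, because only $L(\pi)\le P(\c{G},\pi)$ is needed to pass to $\c{L}_{m,n}\le \c{P}_{m,n}^+$; the paper states the inequality rather than equality for this reason.
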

	\begin{proof}
		Consider the strategy $\c{G}$ in the partial feedback model which guesses 1 until you guess one correctly, then 2 until you guess one correctly, and so on; and if you guess $n$ correctly, play arbitrarily for the remaining trials.  Then $P(\c{G},\rand{\pi})\ge L(\rand{\pi})$, and the result follows from Theorem~\ref{thm:main}.
	\end{proof}
	
	In the original version of this paper, we conjectured that $\c{L}_{m,n}=m-o(m)$ provided $n$ is sufficiently large in terms of $m$.  This has since been proven by Clifton et.\ al.~\cite{CDHSY} in a strong form: they determine the exact formula for $\c{L}_m:=\lim_{n\to \infty} \c{L}_{m,n}$ for all $m$, and they show \[\left|\c{L}_m-\left(m+1-\frac{1}{m+2}\right)\right|\le O(e^{-\be m}),\]
	where $\be>$ is some absolute constant.

	A trivial lower bound for $\c{P}_{m,n}^+$ is $m$ obtained by guessing a card type uniformly at random at each trial.  A more complicated strategy gives $m+\Om(m^{1/2})$ corrects guesses in expectation.  We rigorously prove this in \cite{DGS}, and here we give a brief sketch of the strategy.  Guess 1 a total of $mn/2$ times.  If you guessed at least $\half m+\sqrt{m}$ cards correctly, guess 2 for the rest of the game, otherwise keep guessing 1.  In the latter scenario we always get exactly $m$ correct guesses.  One can show that the first scenario happens with some constant probability, and given this the expected number of 2's left in the second half of the deck is at least $m/2$, and in total this gives a lower bound of $m+\Om(m^{1/2})$.  We suspect that this lower bound is close to the truth.
	\begin{conj}
		For all $\ep>0$ and $n$ sufficiently large,
		\[\c{P}_{m,n}^+=m+O(m^{1/2+\ep}).\]
	\end{conj}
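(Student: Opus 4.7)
The plan is to sharpen the proof of Theorem~\ref{thm:main} so that one may choose $\ep = m^{-1/2+\delta}$ for any fixed $\delta > 0$, which would directly yield $\c{P}_{m,n}^+ \le m + O(\ep m) = m + O(m^{1/2+\delta})$. The obstacle at present is that Lemmas~\ref{lem:subcritical} and~\ref{lem:critical} both rely on Chernoff-type bounds with failure probability $e^{-\Omega(\ep^4 m)}$, which forces $\ep \gtrsim (\log m/m)^{1/4}$ for the tail bounds to be nontrivial. For smaller $\ep$ the exponent $\ep^4 m$ vanishes, exponential concentration becomes vacuous, and must be replaced by a variance-based argument.

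My first step would be to prove a pairwise strengthening of the pointwise lemma, of the form
\[
\Pr[\rand{\pi}_s = g_s \wedge \rand{\pi}_t = g_t \mid H_{s-1}] \le (1+o(1)) \f{m_{g_s}(H_{s-1})\, m_{g_t}(H_{s-1})}{(mn-a_{g_s}(H_{s-1}))(mn-a_{g_t}(H_{s-1}))},
\]
valid on typical histories. The natural attempt is to extend the cyclic-shift bijection of Lemma~\ref{lem:main} to pairs of positions: fix the multiset of values at two indices and average over rearrangements of the two card types $g_s, g_t$ across all legal positions. Because the guesses are themselves adaptive functions of the history, any such bijection on raw permutations must be coupled with a corresponding adjustment at the strategy level, a subtlety not present in Lemma~\ref{lem:main}.

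Given such a covariance estimate, the indicators $X_t := \1[\rand{\pi}_t = g_t]$ satisfy $\E[X_s X_t] \le (1+o(1))\E[X_s]\E[X_t]$, yielding $\Var[Y(H)] = O(m)$ via a standard martingale decomposition of $Y(H) - \E[Y(H)]$. Chebyshev then gives $\Pr[Y(H) > m + \lam \sqrt{m}] = O(\lam^{-2})$, and integrating this polynomial tail against the crude bound $Y(H) \le mn$, together with the truncation argument of Lemma~\ref{lem:conditional-tail}, would in fact give $\E[Y(H)] \le m + O(\sqrt{m\log m})$, stronger than the conjecture. If only the weaker bound is targeted, one could instead use a moment of order $O(\delta^{-1})$, which yields polynomial decay with a $\delta$-dependent exponent and is more robust to losses in the covariance estimate.

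The main obstacle will be the pairwise pointwise bound. Adaptivity means that a correct guess at time $s$ can reveal substantial information that skews the distribution of $g_t$, so the most direct bijective argument does not preserve strategy-consistency of the resulting histories. One plausible route is to first prove the pairwise bound for fixed (non-adaptive) guesses using the permanent identities of Chung--Diaconis--Graham--Mallows~\cite{CDGM} and Diaconis--Graham--Holmes~\cite{DGH}, then lift to the adaptive setting by averaging over the guesser's decision tree conditional on the history $H_{s-1}$. Carrying this out rigorously, and controlling the accumulated error from histories on which the pairwise bound degrades, is where the essential difficulty lies, and likely requires new combinatorial identities for constrained permanents that reflect the structure of the guessing strategy itself.
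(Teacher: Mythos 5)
This statement is an open conjecture in the paper, not a proven result; the paper's concluding section explicitly says ``it seems like new ideas are needed here.'' Your write-up is a plan, and it contains genuine gaps in addition to the one you acknowledge.

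The gap you do flag (the pairwise pointwise bound) is real, but there is a more immediate logical problem in the concentration step. Chebyshev's inequality applied to $Y(H)$ with $\Var[Y(H)]=O(m)$ bounds deviations from $\E[Y(H)]$, not from $m$. The statement $\Pr[Y(H)>m+\lam\sqrt{m}]=O(\lam^{-2})$ therefore already presupposes $\E[Y(H)]\le m+O(\sqrt m)$, which is precisely the quantity you are trying to bound: a variance estimate alone never pins down the mean. The best a priori anchor available is $\E[Y(H)]\le m+O(m^{3/4}\log m)$ from Theorem~\ref{thm:main}, and applying Chebyshev around that point only reproduces the same $m^{3/4}$ error after you integrate the tail. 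Unless you supply an independent argument showing $\E[Y(H)]\le m+o(m^{3/4})$ --- for example by re-doing the mean estimate, not just the concentration --- the bootstrapping does not close.

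A second concern is that the covariance bound $\E[X_sX_t]\le(1+o(1))\E[X_s]\E[X_t]$ is likely too strong as stated for adaptive strategies. In the paper's proof, what actually controls things is the pointwise conditional bound from Lemma~\ref{lem:pointwise}, and the denominator $mn-a_i(H_{t-1})-Y(H_{t-1})$ degrades exactly when many guesses have concentrated on one card or many correct guesses have accumulated. The paper's decomposition into subcritical, critical, and supercritical guesses (and the accompanying hypergeometric comparison in Section~\ref{sec:critical}) exists precisely to control the contribution when the denominator is small. A single global variance bound on $Y(H)$ does not engage with the adaptive choice of $a_i$ at all; in particular it does not rule out the adversarial behavior where the guesser deliberately inflates some $a_i$ into the critical regime, which the paper identifies as the central obstacle. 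Your proposal would need to explain why the covariance bound continues to hold uniformly over such strategies, and the cyclic-shift argument of Lemma~\ref{lem:main} does not obviously survive once the guesses at both times $s$ and $t$ depend on what happened in between. So while second-moment ideas might be part of an eventual solution, the sketch as written has both the acknowledged gap (proving the pairwise bound in the adaptive setting) and an unacknowledged one (turning a variance bound into a mean bound without circularity).
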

	The current proof overshoots this bound at two points.  The first is in Lemma~\ref{lem:union} where we try and bound the probability that an ``adversarial'' binomial distribution deviates significantly from its mean.  Our proof of this lemma essentially only used a union bound, and it's plausible that more sophisticated techniques could decrease this error term.
	
	The second point is in the bounds of Lemmas~\ref{lem:subcritical} and \ref{lem:critical} where we bound the probability that the subcritical or critical guesses are much larger than average.  We note that by adding in an error term of $\ep m$ to the lower bound of $Y_0$ in Lemma~\ref{lem:subcritical}, one can decrease the probability from roughly $e^{-\ep^4 m}$ to $e^{-\ep^3 m}$, so the central issue is the critical case, and it seems like new ideas are needed here.
	
	Another problem of interest is bounding $\c{P}_{m,n}^-$, the fewest number of cards one can guess correctly in expectation in the partial feedback model.  In \cite{DGS} we prove $\c{P}_{m,n}^-\le m-\Om(m^{1/2})$ using an analog of the strategy for $\c{P}_{m,n}^+$.  We also prove an asymptotic lower bound for $\c{P}_{m,n}^-$ of $1-e^{-m}$ by showing that one always has probability at least this of guessing at least one card correctly.  Thus $\c{P}_{m,n}^-=\Om(1)$, which is again in sharp contrast to the complete feedback model where one can get arbitrarily few correct guesses in expectation.  There is still a large gap between these bounds, and as in Theorem~\ref{thm:main} we suspect that the partial feedback model does not allow one to guess significantly fewer guesses than in the no feedback model.
	\begin{conj}
		If $n$ is sufficiently large in terms of $m$, then
		\[\c{P}_{m,n}^-\sim m.\]
	\end{conj}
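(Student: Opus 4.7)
The plan is to establish $\c{P}_{m,n}^- \ge m - o(m)$ by mirroring the overall architecture of the proof of Theorem~\ref{thm:main}. For any minimizer strategy $\c{G}$, the expected number of correct guesses equals $\E\big[\sum_t \Pr[\rand{\pi}_t = g_t \mid H_{t-1}]\big]$, and since $\sum_i \Pr[\rand{\pi}_t = i \mid H_{t-1}] = 1$, the per-step probability is at most $1/n$. The conjecture reduces to showing that the minimizer cannot push this probability significantly below $1/n$ on average.

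The first and most difficult step is to establish a pointwise lower bound analogous to Lemma~\ref{lem:pointwise}. By exchangeability of the future positions $t, \ldots, mn$ given the history, we have
\[\Pr[\rand{\pi}_t = i \mid H_{t-1}] = \frac{\E[\hat m_i \mid H_{t-1}]}{mn - t + 1},\]
where $\hat m_i$ is the number of copies of $i$ remaining in the deck. Writing $\hat m_i = m_i(h_{t-1}) - F_i$, where $F_i$ denotes the (hidden) number of past failed-guess positions whose true card is $i$, the task reduces to upper bounding $\E[F_i \mid H_{t-1}]$. I would attempt this via a telescoping or inductive argument: each position counted by $F_i$ is itself a past position whose conditional probability of being $i$ was small, so one might hope to show $\E[F_i \mid H_{t-1}] = o(m_i)$ for typical histories.

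The second step would complete the argument via a subcritical/critical/supercritical decomposition analogous to Section~\ref{sec:partial}. The key observation is that for $p_t(g_t) \ll 1/n$, the upper bound of Lemma~\ref{lem:pointwise} forces $m_{g_t} \ll m$, which in turn requires $\Omega(m)$ previous correct guesses of $g_t$. This creates a trade-off: each ``saved'' correct guess in the future is paid for by correct guesses in the past. The concentration lemmas (Lemmas~\ref{lem:weak}, \ref{lem:subcritical}, and \ref{lem:critical}) apply equally well in this setting since they rely only on the upper-bound direction of Lemma~\ref{lem:pointwise}, and so handle the atypical histories.

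The crux is therefore the pointwise lower bound. The cyclic-shift argument underpinning Lemma~\ref{lem:pointwise} is inherently one-sided, and there is no obvious bijection giving a matching lower bound in general: specific histories can make $\E[\hat m_i \mid H_{t-1}]$ substantially smaller than $m_i$, for example when failed guesses have accumulated covert copies of $i$ in past positions. One approach would be to track the joint evolution of covert copies via a new inductive argument controlling $\E[F_i \mid H_{t-1}]$ over time; another would be to condition only on ``clean'' histories where the covert-count remains small, using concentration to show such histories dominate. Either way, genuinely new ideas appear to be needed here, consistent with the fact that this problem has remained open.
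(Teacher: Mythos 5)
This statement is a \emph{conjecture} in the paper, not a proven theorem, and you correctly recognize that: your proposal ends by acknowledging that ``genuinely new ideas appear to be needed here, consistent with the fact that this problem has remained open.'' You also correctly identify the central obstruction, which matches the paper's own discussion at the end of Section~\ref{sec:concluding}: there is no analog of Lemma~\ref{lem:pointwise} that \emph{lower} bounds $\Pr[\rand{\pi}_t = i \mid H_{t-1}]$. The paper even gives the concrete counterexample you gesture at — incorrectly guess $1$ a total of $(m-1)n$ times so that all remaining cards are copies of $1$; then $\Pr[\rand{\pi}_t = 2] = 0$ even though $2$ has never been guessed and no correct guesses have been made. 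So the ``covert copies'' issue you raise via $F_i$ is real and is exactly why the conjecture is open.

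However, two of the intermediate claims in your sketch are incorrect and would not survive an attempt to make the outline rigorous. First, you assert that ``for $p_t(g_t) \ll 1/n$, the upper bound of Lemma~\ref{lem:pointwise} forces $m_{g_t} \ll m$.'' This is backwards: Lemma~\ref{lem:pointwise} only says $\Pr[\rand{\pi}_t = i \mid H_{t-1}] \le m_i/(mn - a_i - Y)$, so the upper bound can be large (of order $1/n$) while the true conditional probability is far smaller — the paper's counterexample has $m_2 = m$ and probability $0$. There is no implication from small true probability to small $m_{g_t}$; that implication is precisely the missing lower bound. Second, the concentration machinery (Lemmas~\ref{lem:weak}, \ref{lem:subcritical}, \ref{lem:critical}) does not ``apply equally well'': those lemmas bound the probability that the guesser does \emph{better} than average, which is the wrong tail for lower-bounding $\c{P}_{m,n}^-$. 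To show a minimizer cannot do much worse than $m$ in expectation, one needs an anti-concentration or lower-tail statement, which does not follow from the upper-bound direction alone. Your overall assessment of the difficulty is right, but the bridging steps you propose do not actually connect as claimed.
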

	The central difficulty in this setting is that there does not exist an analog of Lemma~\ref{lem:pointwise} which lower bounds $\Pr[\rand{\pi}_t=i]$ given that we have not guessed $i$ many times and that we have guessed few cards correctly.  For example, say we incorrectly guessed 1 a total of $(m-1)n$, so the remaining cards are $m$ copies of 1.  Then the probability that the next card is 2 is 0 despite the fact that we have not guessed 2 at all nor guessed any cards correctly. 
	
	\section*{Acknowledgments}
	
	The authors would like to thank Steve Butler, Sourav Chatterjee, Andrea Ottolini, Mackenzie Simper, and Yuval Wigderson for fruitful conversations.

\end{document}